\newtheorem{proposition}{Proposition}[section]
\newtheorem{theorem}[proposition]{Theorem}
\newtheorem{corollary}[proposition]{Corollary}
\theoremstyle{definition}
\newtheorem{definition}[proposition]{Definition}
\newtheorem{remark}[proposition]{Remark}
\numberwithin{equation}{section}
\newcommand{\R}{\mathbb R}
\newcommand{\E}{\mathcal{E}}
\renewcommand{\leq}{\leqslant}
\def\Dx{\Delta_x}
\def\Cal{\mathcal}
\def\({\left(}
\def\){\right)}
\def\Nx{\nabla_x}
\def\Dt{\partial_t}
\def\eb{\varepsilon}
\begin{document}

\title[Hyperbolic Cahn-Hilliard-Oono equation]{Global well-posedness and attractors for the hyperbolic Cahn-Hilliard-Oono equation in the whole space }
\author[A. Savostianov and S. Zelik]{Anton Savostianov${}^1$ and Sergey Zelik${}^1$}

\begin{abstract} We prove the global well-posedness of the so-called hyperbolic relaxation of the Cahn-Hilliard-Oono equation in the whole space $\R^3$ with the non-linearity of the sub-quintic growth rate. Moreover, the dissipativity and the existence of a smooth global attractor in the naturally defined energy space is also verified. The result is crucially based on the Strichartz estimates for the linear Scr\"odinger equation in $\R^3$.
\end{abstract}

\subjclass[2000]{35B40, 35B45}
\keywords{Hyperbolic Cahn-Hilliard-Oono equation, attractors, unbounded domains, Strichartz estimates}
\address{${}^1$ University of Surrey, Department of Mathematics, \newline
Guildford, GU2 7XH, United Kingdom.}
\thanks{The authors would like to thank Nikolas Burq, Giulio Schimperna and Fabrice Planchon for many stimulating discussions}
\email{s.zelik@surrey.ac.uk}
\email{a.savostianov@surrey.ac.uk}

\maketitle
\tableofcontents

\section{Introduction}\label{s1}
It is well-known that the  Cahn-Hilliard (CH) equation
\begin{equation}\label{0.CH}
\Dt u+\Dx(\Dx u-f(u)+g)=0,\ \ x\in\Omega, \ \ u\big|_{t=0}=u_0,
\end{equation}
where the unknown function $u=u(t,x)$ is the so-called order parameter, $f$ is a given non-linear interaction function and $g$ are the given external forces, is central for the material sciences and many papers are devoted to the mathematical study of this and related equations, see \cite{BaE,CH,CMZ,D,EK,EK1,EMZ1,Ell,ElS,MZ3,MZ2,MZ1,No1} and references therein. The most studied is the case where the underlying domain $\Omega$ is bounded. In this case a more or less complete theory of this equation including the global well-posedness, dissipativity, existence of finite dimensional global and exponential attractors for various classes of non-linearities (e.g., for fast growing or even singular ones) are available in the literature, see \cite{CMZ,MZ,No1} and references therein.
\par
The case where the underlying domain is unbounded is more difficult and less understood. The main problem here is the loss of dissipation in the low frequency limit $u_0=u_0(\eb x)$, $\eb\to0$, which prevents the solution semigroup to be dissipative in a classical sense even in the case of finite energy solutions, see \cite{DKS} for some partial results on the long-time behavior of solutions for the viscous CH equations in the whole space and \cite{CM,PZ} for the non-dissipative bounds in the case of infinite-energy solutions. We mention also the case of  CH equation in a cylindrical domain   with Dirichlet boundary conditions considered in \cite{Bo,EKZ} where the dissipation is guaranteed by the Poincare inequality.
\par
Therefore, in order to restore the dissipation mechanism, it seems reasonable to consider the physically relevant modifications of the initial CH model. One of the most convenient  from the mathematical point of view model for that is the  Cahn-Hilliard-Oono  (CHO) equation
\begin{equation}\label{0.CHO}
\Dt u+\Dx(\Dx u-f(u)+g)+\alpha u=0,\ \ u\big|_{t=0}=u_0,
\end{equation}
where the extra term $\alpha u$ with $\alpha>0$ models the non-local long-ranged interactions, see \cite{Miro,Oop} for more details. The extra dissipation term $\alpha u$ does not change the type of the equation and does not affect much the analytical properties of the solutions on the finite time interval. However, the presence of this extra dissipation removes the problem with low frequency modes and allows us to apply the weighted energy theory and verify the global well-posedness and dissipativity even in the case of infinite energy solutions, see \cite{PZ} for more details.
\par
Another interesting modification is the so-called hyperbolic relaxation of the CH equation:
\begin{equation}\label{0.hCH}
\eb\Dt^2 u+\Dt u+\Dx(\Dx u-f(u)+g)=0,\ \ \eb>0,\  \ u\big|_{t=0}=u_0,\ \ \Dt u\big|_{t=0}=u_1
\end{equation}
which has been introduced by P. Galenko in order to treat in a more accurate way the non-equilibrium effects in spinodal decomposition, see \cite{GJ,GL1,GL2,GL3}. In contrast to the previous modification, the inertial term $\eb\Dt^2 u$ changes drastically the type of the equation (from parabolic to hyperbolic) and the analytical properties of its solutions. Moreover, the nonlinearity $\Dx(f(u))$ becomes "critical" even if the equation is considered in the class of smooth solutions and "supercritical" if the estimate $u(t)\in L^\infty(\Omega)$ is not available. By this reason, despite the big current interest to this equation, see e.g., \cite{GGMP1D,GGMP3D,GGMP1DM,GSZ,GSZ1,GPS} and references therein, a considerable mathematical theory (which includes global well-posedness, dissipativity, asymptotic smoothness, etc.) exists only in space dimension one or two, see \cite{GGMP1D,GGMP1DM,GSZ}. Moreover, to the best of our knowledge, the global well-posedness of solutions for this problem in 3D is not known even in the case of bounded domain $\Omega$ and globally bounded non-linearity $f$, so the so-called weak trajectory attractors are used to study the long-time behavior of the considered equation, see \cite{CV,GSZ3}. Some exception is the case of very small $\eb>0$ where the smooth dissipative solutions can be constructed using the perturbation arguments, see \cite{GSZ1,GPS}.
\par
The main aim of the present paper is to combine the CHO equation and the hyperbolic relaxation mechanism mentioned above and consider the {\it hyperbolic} CHO equation
\begin{equation}\label{0.hCHO}
\Dt^2u+\Dt u+\Dx(\Dx u-f(u)+g)+\alpha u=0
\end{equation}
in the whole space $\Omega=\R^3$. In contrast to the previous contributors, we use not only the proper energy estimates, but also the so-called Strichartz estimates which are known to be crucial to study the non-linear wave or Scr\"odinger equations, see \cite{stri,plan2,plan, sogge-book, straus,tao} and references therein. In particular, using the known Stricharts estimates for the Schr\"odinger equations, we deduce the crucial control $u\in L^4(t,t+1;L^\infty(\R^3))$
for the solutions of \eqref{0.hCHO} which is sufficient to verify the global well-posedness in the case of sub-quintic nonlinearity $f$:
\begin{equation}\label{0.f}
|f''(u)|\le C(1+|u|^{3-\kappa}),\ \ \kappa>0,
\end{equation}
satisfying some natural dissipativity assumptions. To be more precise, we consider the hyperbolic CHO equation in the energy phase space
\begin{equation}
\Cal E:=[\dot H^1(\R^3)\cap\dot H^{-1}(\R^3)]\times \dot H^{-1}(\R^3),\ \ \xi_u:=(u,\Dt u)\in\Cal E
\end{equation}
and restrict ourselves to consider only the energy solutions $\xi_u\in C(0,T;\Cal E)$ which possesses the additional regularity $u\in L^4(0,T;L^\infty(\R^3))$ which we referred as the Strichartz solutions. Here and below $\dot H^s(\R^3)$ stands for the homogeneous Sobolev space of order $s$ in $\R^3$.  The main result of the present paper is the following theorem.

\begin{theorem}\label{Th0.main} Let the non-linearity $f$ satisfy the sub-quintic growth restriction and some dissipativity assumptions stated in \eqref{2.f}. Assume also that the extrenal force $g\in\dot H^1(\R^3)$. Then, for any initial data $\xi_0=(u_0,u_1)\in\Cal E$, equation \eqref{0.hCHO} possesses a unique global Strichartz solution $u(t)$ and this solution satisfies the following dissipative estimate:
\begin{equation}\label{0.dis}
\|\xi_u(t)\|_{\Cal E}+\|u\|_{L^4(t,t+1;L^\infty)}\le Q(\|\xi_u(0)\|_{\Cal E})e^{-\beta t}+Q(\|g\|_{\dot H^1}),
\end{equation}
where the positive constant $\beta$ and monotone function $Q$ are independent of the solution $u$ and  time $t\ge0$.
\par
Moreover, the Strichartz solution semigroup $S(t):\Cal E\to\Cal E$ generated by equation \eqref{0.hCHO} possesses a global attractor in the energy space $\Cal E$ and this attractor is bounded in the more regular space
\begin{equation}
\Cal E_2:=[\dot H^3(\R^3)\cap\dot H^{-1}(\R^3)]\times[\dot H^1(\R^3)\cap\dot H^{-1}(\R^3)].
\end{equation}
\end{theorem}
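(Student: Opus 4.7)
My approach would follow the by-now classical dispersive scheme: linear Strichartz estimates, fixed-point local well-posedness in a Strichartz space, a dissipative energy identity for global existence, and finally a splitting argument for the attractor. The principal operator $\Dt^2+\Dx^2$ of \eqref{0.hCHO} factors as $(\Dt-i\Dx)(\Dt+i\Dx)$, so the linear homogeneous equation is a superposition of 3D Schr\"odinger evolutions $e^{\pm it\Dx}$. Standard Strichartz estimates for $e^{it\Dx}$ in $\R^3$ at the admissible pair $(4,3)$, combined with the Sobolev embedding $\dot W^{s,3}\hookrightarrow L^\infty$ for $s>1$, yield exactly the mixed-norm control $u\in L^4_{\rm loc}(L^\infty)$ in terms of $\Cal E$-data and forcings measured in a dual Strichartz space.

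\textbf{Local well-posedness, globalization, dissipativity.} I would set up a contraction mapping in a ball of $C([0,T];\Cal E)\cap L^4(0,T;L^\infty)$ using the Duhamel formula and the linear Strichartz estimate. Expanding $\Dx f(u)=f'(u)\Dx u+f''(u)|\Nx u|^2$ and using the sub-quintic bound \eqref{0.f}, H\"older in time gives $\|\Dx f(u)\|_{\text{dual Strichartz}}\le C(\|u\|_{\Cal E})\,T^\theta(1+\|u\|_{L^4L^\infty}^{3-\kappa})$ for some $\theta>0$, which closes the fixed point for small $T$. The a priori bound is then obtained by testing the equation against $(-\Dx)^{-1}(\Dt u+\d u)$ for a small $\d>0$: this produces a coercive modified energy and a dissipation proportional to itself, the zeroth-order term $\a u$ being essential to supply dissipation in the $\dot H^{-1}$ direction and thus bypass the low-frequency CH pathology. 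Combined with the local Strichartz bound and a standard continuation argument, this yields both \eqref{0.dis} and the global $L^4L^\infty$ bound. Uniqueness follows from Gronwall's lemma applied to the difference equation, using $|f'(u)-f'(v)|\le C(1+|u|+|v|)^{3-\kappa}|u-v|$ together with the Strichartz control of both solutions.

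\textbf{Global attractor.} A bounded absorbing set in $\Cal E$ is immediate from the dissipative estimate. The non-trivial point is asymptotic compactness of $S(t)$, which in $\R^3$ cannot be obtained from Rellich. I would use a classical smooth/rough splitting: decompose the solution into a part $v$ that enjoys higher regularity in $\Cal E_2$ (obtained by applying Strichartz and energy estimates to a smoothed-out initial datum, and using the dissipation to show that the regular part has uniformly bounded $\Cal E_2$-norm along the trajectory) and a part $w$ that decays to zero in $\Cal E$. Combined with a space-tail estimate obtained by testing the energy identity against a cut-off supported outside a large ball, this gives pre-compactness of orbits, hence the global attractor $\Cal A$; its boundedness in $\Cal E_2$ is a byproduct of the same splitting applied to the invariant set, using that bounded complete trajectories exist on $\Cal A$.

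\textbf{Main difficulty.} The true technical heart of the argument is the nonlinear Strichartz analysis at the sub-quintic threshold: $|f''|\le C(1+|u|^{3-\kappa})$ is sharp, and the contraction must exploit the small positive power $\kappa$ of the $L^4L^\infty$ norm to close. Pure energy methods are insufficient, so the dispersive $L^4L^\infty$ control of $u$, which is not standard for parabolic Cahn--Hilliard-type equations, is indispensable. Making this mixed-norm estimate coexist with the weighted/cut-off energy argument needed in the attractor step, without spoiling the dispersive gain under localization, is a second and more technical hurdle that I would expect to require a careful commutator analysis of the Strichartz bounds with the spatial cut-off.
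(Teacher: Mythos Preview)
Your well-posedness and dissipativity outline is essentially the paper's: the Schr\"odinger factorization of the plate operator, the linear Strichartz estimate measured with forcing in $L^1_t\dot H^1_x$, the contraction in $C([0,T];\Cal E)\cap L^4(0,T;L^\infty)$ exploiting the positive power $T^\gamma$ coming from $\kappa>0$, and the modified energy from testing against $(-\Dx)^{-1}(\Dt u+\delta u)$ all match Sections~\ref{s2}--\ref{s3}.

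The gap is in the attractor step. Your plan is the standard one---smooth/rough splitting plus a spatial tail estimate---but the paper explicitly argues that neither ingredient is available here at the energy level. For the tail estimate, the hyperbolic CHO equation does \emph{not} admit a weighted energy theory of the usual kind: testing against a localized $(-\Dx)^{-1}\chi_R\Dt u$ produces commutators of the nonlocal operator $\Dx^{-1}$ with the cut-off that cannot be closed using only $\Cal E$-control (see Remark~\ref{Rem4.bad}). For the splitting, the obstruction is more structural: obtaining $\Cal E_2$-regularity of the smooth part requires controlling the linearized term $f'(u)v$ in $L^1_t\dot H^1_x$, and with only $f'(u)\in L^1_tL^\infty_x$ this does \emph{not} close---one loses exactly the factor needed to absorb $(f'(u)\Nx v,\Nx\Dx^{-1}\Dt v)$. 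The paper's Section~\ref{s5} resolves this by an approximation trick (Corollaries~\ref{Cor4.app1}--\ref{Cor4.app2}) that writes $f'(u)=A_\eb^{f'}+B_\eb^{f'}$ with $A_\eb^{f'}$ small in $L^1L^\infty$ and $B_\eb^{f'}$ smooth, but this decomposition is obtained from the \emph{compactness} of the trajectory set $\Cal K$, which must therefore be proved first. In other words, the dependency you propose (regularity $\Rightarrow$ compactness) is inverted in the paper (compactness $\Rightarrow$ regularity), and the paper indicates the direct route is blocked.

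What the paper does instead for asymptotic compactness is Ball's energy method: along a sequence $S(t_n)\xi_n$ one extracts a weak limit $u\in\Cal K$, writes the dissipative energy identity in integrated form with the exponential weight $e^{\beta s}$, and shows by weak lower semicontinuity and Fatou that $\limsup\|\xi_{u_n}(0)\|_{\Cal E}\le\|\xi_u(0)\|_{\Cal E}$, hence strong convergence. No splitting and no tail estimate are used. Only \emph{after} compactness of $\Cal K$ is in hand does the paper differentiate in time and run the delicate energy/Strichartz bootstrap of Section~\ref{s5} to get $\Cal E_2$-boundedness of the attractor. Your commutator-analysis remark correctly anticipates a difficulty, but the resolution is not to localize the Strichartz estimate; it is to avoid localization altogether via the energy method.
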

Note that in the case of damped wave equations the similar results have been recently extended to the case of general bounded domains and even with the critical quintic nonlinearity, see \cite{stri,plan2,KSZ2013}. However, in contrast to the wave equation (with the solution operator $e^{it(-\Dx)^{1/2}}$), the Strichartz estimates are more complicated for the case of the Scr\"odinger equation (with the solution operator $e^{it\Dx}$), and cannot be extended to general bounded domains without loss of regularity, see \cite{sogge1} and references therein. In particular, this loss of regularity does not allow to obtain the desired control of the $L^4(L^\infty)$ norm of the solution and prevents the straightforward extension of Theorem \ref{Th0.main} to the case of bounded domains, see Remark \ref{Rem5.stri} for more details.
\par
The paper is organized as follows.
\par
The classical Strichartz estimate for the linear Scr\"odinger equations is reminded in Section \ref{s2}. Based on this estimate, we  deduce its analogues for the case of plate equation as well as for the linear hyperbolic CHO equation (with $f=0$). The estimates obtained in this section are the key technical tools for our study of the non-linear hyperbolic CHO equation.
\par
The global well-posedness for the Strichartz solutions of the non-linear hyperbolic CHO equation is verified in Section \ref{s3}. Moreover, the dissipative estimate \eqref{0.dis} is verified there. Note that the fact that the non-linearity $f$ has a  sub-quintic growth rate is essentially used here. Similarly to \cite{KSZ2013}, we verify the control of the Strichartz norm through the energy norm of the solution and gain then the dissipativity of the Strichartz norm just from the straightforward energy estimate. Note also that the well-posedness is proved only in the class of Strichartz solution and we do not know whether or not any energy solution is the Strichartz one.
\par
The asymptotic compactness of the Strichartz solution semigroup $S(t)$ in the energy space $\Cal E$  is verified in Section \ref{s4} using the so-called energy method, see \cite{Ball,MRW} for more details.
\par
The further regularity of the global attractor $\Cal A$ (its boundedness in $\Cal E_2$) is verified in Section \ref{s5}. Note that, even with the Strichartz estimate, the hyperbolic CHO equation remains "critical" and the proof of the further regularity of the attractor is far from being trivial and the standard methods seem not working here. We overcome this difficulty by using in a crucial way the asymptotic compactness verified in the previous section.
\par
Finally, in the concluding Section \ref{s6}, we discuss various generalizations and open problems related with the proved results.

\section{Preliminaries: Energy and Strichartz estimates for the linear plate equations}\label{s2}
The aim of this section is to introduce a number of estimates for the linear Cahn-Hilliard-Oono (CHO) equation which are crucial for what follows. We start with recalling the known estimates for the linear homogeneous Schr\"odinger equation which play the central role in deriving the Strichartz type estimates for the CHO equation. Namely, consider the following equation in the whole space $\R^3$:
\begin{equation}\label{1.schro}
\Dt U-i\Dx U=H(t),\ \ U\big|_{t=0}=U_0,
\end{equation}
where $U=U(t,x)$ is the unknown complex valued function and $H=H(t,x)$ and $U_0=U_0(x)$ are given functions. We assume that
\begin{equation}\label{1.schro-ini}
U_0\in \dot H^1(\R^3),\ \ H\in L^1_{loc}(\R,\dot H^1(\R^3)).
\end{equation}
Here and below, we denote by $\dot H^s(\R^3)$ the  homogeneous Sobolev spaces of order $s$, in contrast to this, the usual non-homogeneous Sobolev spaces will be denoted by $H^s(\R^3)$ or $W^{s,p}(\R^3)$ (if $p\ne2$), see \cite{triebel} for more details concerning these spaces.
\par
The solution of equation \eqref{1.schro} can be written (at least formally) using the variation of constants formula:
\begin{equation}\label{1.schro-var}
U(t)=e^{it\Dx}U_0+\int_0^t e^{i\Dx(t-s)}H(s)\,ds.
\end{equation}
It is also well-known that formula \eqref{1.schro-var} defines indeed a unique solution $U\in C_{loc}(\R,\dot H^1(\R^3))$ of equation \eqref{1.schro}. More delicate is the following Strichartz estimate for $U$.
\begin{proposition}\label{Prop1.Str-schro} Let the initial data satisfies \eqref{1.schro-ini}. Then, the solution $U$ of problem \eqref{1.schro}
belongs to $L^4_{loc}(\R,L^{\infty}(\R^3))$ and, for any $T>0$, the following estimate holds:
\begin{equation}\label{1.schro-stri}
\|U\|_{C(-T,T;\dot H^1)}+\|U\|_{L^4(-T,T;L^\infty)}\le C_T(\|U_0\|_{\dot H^1}+\|H\|_{L^1(-T,T;\dot H^1)}),
\end{equation}
where the constant $C_T$ may depend on $T$, but is independent of $U_0$ and $H$.
\end{proposition}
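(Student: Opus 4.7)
Since Proposition \ref{Prop1.Str-schro} is a particular instance of the classical Strichartz estimates for the Schr\"odinger group on $\R^3$, the plan is to reduce to the free propagator and invoke the standard dispersive machinery. First, by the Duhamel formula \eqref{1.schro-var} and Minkowski's inequality for integrals, one has
\[
\Bigl\|\int_0^t e^{i(t-s)\Dx}H(s)\,ds\Bigr\|_X \le \int_{-T}^T \bigl\|e^{i(t-s)\Dx}H(s)\bigr\|_X\,ds,
\]
where $X=C([-T,T];\dot H^1)\cap L^4(-T,T;L^\infty)$. Thus it suffices to prove the homogeneous estimate $\|e^{it\Dx}U_0\|_X\le C_T\|U_0\|_{\dot H^1}$; the inhomogeneous part then follows by applying the homogeneous bound pointwise in the Duhamel integral. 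The $C_t\dot H^1$ part is immediate from the $L^2$-unitarity $\|e^{it\Dx}v\|_{L^2}=\|v\|_{L^2}$ applied to $v=(-\Dx)^{1/2}U_0$, using that $(-\Dx)^{1/2}$ commutes with the propagator.

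The remaining estimate $\|e^{it\Dx}U_0\|_{L^4_tL^\infty_x}\le C\|U_0\|_{\dot H^1}$ is the Strichartz pair $(p,q)=(4,\infty)$ at the $\dot H^1$-level; note that $\tfrac{2}{4}+\tfrac{3}{\infty}=\tfrac{1}{2}=\tfrac{3}{2}-1$, which is the correct scaling. The starting point is the dispersive bound
\[
\|e^{it\Dx}f\|_{L^\infty_x}\le C|t|^{-3/2}\|f\|_{L^1_x},
\]
which combined with $L^2$-unitarity gives $\|e^{it\Dx}\|_{L^{q'}\to L^q}\le C|t|^{-3(1/2-1/q)}$ by Riesz-Thorin interpolation. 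The Keel-Tao $TT^*$ argument together with the Hardy-Littlewood-Sobolev inequality then yields the full family of $L^2$-level Strichartz estimates for admissible pairs $(p,q)$ with $2/p+3/q=3/2$. To lift this to the $\dot H^1$-level with $q=\infty$, I would perform a Littlewood-Paley decomposition $U_0=\sum_N P_NU_0$, apply the admissible pair $(4,3)$ to each frequency block, and combine with Bernstein's inequality $\|P_Ng\|_{L^\infty_x}\lesssim N\|P_Ng\|_{L^3_x}$ to get $\|P_Ne^{it\Dx}U_0\|_{L^4_tL^\infty_x}\lesssim N\|P_NU_0\|_{L^2_x}$.

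The main technical obstacle is the dyadic summation in this last step: since $L^\infty_x$ is not compatible with the square-function characterisation of $\dot H^1$, passing from the frequency-localised bound to the global one requires either tolerating a harmless logarithmic factor in the $T$-dependent constant $C_T$ (which is acceptable, as $C_T$ is already allowed to depend on $T$) or invoking a finer Besov-type interpolation argument. This is the point at which I would simply quote the classical endpoint result, proved in detail in the references \cite{stri,plan2,plan,sogge-book,tao} already cited in the paper, to close the proof of \eqref{1.schro-stri}.
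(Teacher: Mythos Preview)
The paper does not give a proof of Proposition~\ref{Prop1.Str-schro} at all: immediately after the statement it writes ``For the proof of this result, see \cite{sogge1}.'' So your sketch is already more detailed than what the paper provides, and it correctly identifies the standard ingredients --- Duhamel reduction, $L^2$-unitarity for the $C_t\dot H^1$ part, the dispersive decay $|t|^{-3/2}$, the $TT^*$ machinery, and the need for a Littlewood--Paley/Bernstein argument to reach the endpoint $q=\infty$.

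There is one genuine slip in your proposal. You suggest that the dyadic summation problem at $q=\infty$ can be handled by ``tolerating a harmless logarithmic factor in the $T$-dependent constant $C_T$''. This does not work: the summation $\sum_N N\|P_NU_0\|_{L^2}$ runs over \emph{all} dyadic frequencies $N\in 2^{\mathbb Z}$ and has no natural cutoff coming from the time interval $[-T,T]$, so the divergence cannot be absorbed into $C_T$. The triangle-inequality route you describe gives only the Besov bound $\|e^{it\Dx}U_0\|_{L^4_tL^\infty_x}\lesssim\|U_0\|_{\dot B^1_{2,1}}$, which is strictly stronger on the data side than the claimed $\dot H^1$ bound. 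Reaching $\dot H^1$ genuinely requires the finer Besov/square-function argument you allude to in your final sentence (roughly: exploit that $4\ge 2$ to move the $\ell^2_N$ sum inside the $L^4_t$ norm, obtaining $L^4_t\dot B^0_{\infty,2}$ control from $\dot H^1$ data, and then close via the refined dispersive estimate or the specific structure of the Schr\"odinger kernel in $\R^3$). Since you explicitly flag this as the point where one quotes the literature --- exactly as the paper does --- your proposal is adequate, but you should drop the misleading remark about the logarithm.
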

For the proof of this result, see \cite{sogge1}.
\par
The next corollary which establishes that $U\in L^4_{loc}(\R,C(\R))$ will be essentially used in the sequel for verifying the regularity of the global attractor for the hyperbolic CHO equation.
\begin{corollary}\label{Cor1.schro-cont} Let the assumptions of Proposition \ref{Prop1.Str-schro} hold. Then the solution of equation \eqref{1.schro} satisfies
\begin{equation}\label{1.schro-cont}
U\in L^4(-T,T;C(\R^3))
\end{equation}
for any $T>0$.
\end{corollary}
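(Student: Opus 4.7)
The plan is to approximate the data by smooth functions, use the fact that the free Schr\"odinger propagator preserves regularity to see that the approximate solutions are continuous in $x$, and then transfer the continuity to $U$ via the Strichartz control \eqref{1.schro-stri}.

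Concretely, choose sequences $U_0^n\in\mathcal S(\R^3)$ and $H^n\in L^1(-T,T;\mathcal S(\R^3))$ (for instance, simple functions with Schwartz-valued steps) such that $U_0^n\to U_0$ in $\dot H^1(\R^3)$ and $H^n\to H$ in $L^1(-T,T;\dot H^1(\R^3))$; both approximations exist by density. Since the Fourier multiplier $e^{-it|\xi|^2}$ has modulus one and preserves the Schwartz class, the Duhamel solutions $U^n$ determined by \eqref{1.schro-var} with data $(U_0^n,H^n)$ satisfy $U^n(t,\cdot)\in\mathcal S(\R^3)$ for every $t$, and are in particular continuous and bounded on $\R^3$.

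Apply \eqref{1.schro-stri} to the difference $U^n-U^m$, which solves \eqref{1.schro} with initial data $U_0^n-U_0^m$ and forcing $H^n-H^m$, to get
\begin{equation*}
\|U^n-U^m\|_{L^4(-T,T;L^\infty)}\le C_T\bigl(\|U_0^n-U_0^m\|_{\dot H^1}+\|H^n-H^m\|_{L^1(-T,T;\dot H^1)}\bigr),
\end{equation*}
so $\{U^n\}$ is Cauchy in $L^4(-T,T;L^\infty(\R^3))$. A further application of \eqref{1.schro-stri} to $U^n-U$ identifies the limit as $U$ itself. Passing to a subsequence $U^{n_k}$, one may assume $\|U^{n_k}(t,\cdot)-U(t,\cdot)\|_{L^\infty(\R^3)}\to 0$ for almost every $t\in(-T,T)$. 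For such $t$, $U(t,\cdot)$ is the uniform limit of continuous bounded functions and is therefore continuous; combined with \eqref{1.schro-stri}, this yields \eqref{1.schro-cont}.

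The main subtlety is the last step: convergence in $L^4_tL^\infty_x$ does not by itself ensure that the spatial continuity of the approximants is inherited by the limit at each $t$ individually. This forces the extraction of a subsequence that converges in the $L^\infty$-norm for almost every $t$, after which uniform convergence at those $t$ transmits continuity to $U(t,\cdot)$.
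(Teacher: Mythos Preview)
Your proof is correct and follows essentially the same strategy as the paper: approximate the data by smooth functions (the paper uses Fourier frequency cutoffs $P_N$ rather than Schwartz approximants, but the idea is identical), observe that the approximate solutions are spatially continuous, and use the Strichartz bound \eqref{1.schro-stri} on differences to pass to the limit. The paper's concluding step is marginally cleaner than your a.e.\ subsequence extraction---since $C_b(\R^3)$ (or $C_0(\R^3)$) is a closed subspace of $L^\infty(\R^3)$, a sequence of $L^4_tC_x$ functions that is Cauchy in $L^4_tL^\infty_x$ is automatically Cauchy in $L^4_tC_x$ and its limit lies there---but your argument reaches the same conclusion.
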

\begin{proof} Indeed, let $\chi_N(\xi)$ be the characteristic function of a disk $\{N^{-1}\le|\xi|\le N\}$ and let the projector $P_N:\dot H^s\to\dot H^s$ be defined as follows:
\begin{equation}\label{1.pro}
(P_N f)(x):=\Cal F^{-1}_{\xi\to x}(\chi_N\Cal F_{x\to\xi}f),
\end{equation}
where $\Cal F_{x\to\xi}$ is a Fourier transform in $\R^3$. Let also $U_N:=P_N U$. This function obviously solves
\begin{equation}\label{1.smooth}
\Dt U_N-i\Dx U_N=H_N(t):=P_N H(t),\ \ U_N\big|_{t=0}=P_N U_0.
\end{equation}
Then, on the one hand, $U_N$ is $C^\infty$-smooth in $x$ (since $P_NU_0$ and $P_NH$ are smooth in $x$), so $U_N\in L^4(-T,T;C(\R^3))$ for all $N$. On the other hand, since $P_NU_0\to U_0$ in $\dot H^1$ and $P_NH\to H$ in $L^1(-T,T;\dot H^1)$, applying estimate \eqref{1.schro-stri} to the function $U_N-U_M$, we see that $U_N$ is a Cauchy sequence in $L^4(-T,T;C(\R^3))$. Thus, \eqref{1.schro-cont} is proved and the lemma is also proved.
\end{proof}
\begin{remark}\label{Rem1-C0} Arguing analogously, we see that the space $C(\R^3)$ in \eqref{1.schro-cont} can be replaced by the space $C_0(\R^3)$ consisting of all continuous functions on $\R^3$ tending to zero as $|x|\to\infty$.
\end{remark}
We now switch to the so-called plate equation of the form
\begin{equation}\label{1.plate}
\Dt^2 V+\Dx^2 V=\Dx H(t),\  V\big|_{t=0}=V_0,\ \ \Dt V\big|_{t=0}=V_1,
\end{equation}
where
\begin{equation}\label{1.plate-ini}
V_0\in\dot H^1(\R^3),\ \ V_1\in\dot H^{-1}(\R^3),\ \ H\in L^1_{loc}(\R,\dot H^1(\R^3)).
\end{equation}
Then, the formal multiplication of the equation by $\Dx^{-1}V_t$ gives the energy identity
\begin{equation}\label{1.plate-en}
\frac12\frac d{dt}(\|\Dt V\|^2_{\dot H^{-1}}+\|V\|^2_{\dot H^1})=-(H,\Dt V)
\end{equation}
(here and below, we denote by $(U,V)$ the usual inner product in $L^2(\R^3)$). This identity gives in a standard way the well-posedness of the equation in the class of energy solutions
\begin{equation}\label{1.plate-sol}
\xi_V:=(V,\Dt V)\in C_{loc}(\R,\dot H^1\times \dot H^{-1})
\end{equation}
as well as the energy estimate
\begin{equation}\label{1.en-plate}
\|\xi_V(T)\|_{\dot H^1\times\dot H^{-1}}\le C_T(\|\xi_V(0)\|_{\dot H^1\times\dot H^{-1}}+\|H\|_{L^1(-T,T;\dot H^1)}),
\end{equation}
where $C_T$ may depend on $T$, but is independent of $V$, see the case of linear CHO equation below for more details. In order to obtain more delicate Strichartz type estimates for this equation, we write down the variation of constants formula:
\begin{equation}\label{1.plate-var}
V(t)=\sin(\Dx t)\Dx^{-1}V_1+\cos(\Dx t)V_0+\int_0^t\sin(\Dx(t-s))H(s)\,ds.
\end{equation}
Using this formula and the result of Proposition \ref{Prop1.Str-schro} for the Schr\"odinger equation, we obtain the following result.
\begin{corollary}\label{Cor1.Str-plate} Let the assumptions \eqref{1.plate-ini} be satisfied. Then the solution $V$ of the plate equation \eqref{1.plate} belongs to the space $L^4_{loc}(\R,C(\R^3))$ and the following estimate holds for any $T>0$:
\begin{multline}\label{1.plate-stri}
\|\xi_V(T)\|_{\dot H^1\times\dot H^{-1}}+\|V\|_{L^4(-T,T;L^\infty)}\le\\\le C_T(\|\xi_V(0)\|_{\dot H^1\times\dot H^{-1}}+\|H\|_{L^1(-T,T;\dot H^1)}),
\end{multline}
where $C_T$ may depend on $T$, but is independent of $V$.
\end{corollary}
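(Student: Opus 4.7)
The plan is to reduce the plate equation \eqref{1.plate} to two inhomogeneous Schr\"odinger equations, to which Proposition \ref{Prop1.Str-schro} and Corollary \ref{Cor1.schro-cont} apply directly. The conceptual key is the formal factorization $\Dt^2+\Dx^2=(\Dt-i\Dx)(\Dt+i\Dx)$, which realises any plate evolution as a superposition of two Schr\"odinger evolutions.

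First I would rewrite the Duhamel representation \eqref{1.plate-var} using $\cos(\Dx t)=\tfrac12(e^{i\Dx t}+e^{-i\Dx t})$ and $\sin(\Dx t)=\tfrac1{2i}(e^{i\Dx t}-e^{-i\Dx t})$. Assuming, as in the statement, that $V_0,V_1,H$ are real, a short computation gives
$$V(t)=\mathrm{Re}\,U_1(t)+\mathrm{Im}\,U_2(t),$$
where
$$U_1(t):=e^{i\Dx t}\(V_0-i\Dx^{-1}V_1\),\qquad U_2(t):=\int_0^t e^{i\Dx(t-s)}H(s)\,ds$$
solve $\Dt U_1-i\Dx U_1=0$ with $U_1(0)=V_0-i\Dx^{-1}V_1$ and $\Dt U_2-i\Dx U_2=H$ with $U_2(0)=0$, respectively. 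Since $V_1\in\dot H^{-1}(\R^3)$, the Fourier multiplier $\Dx^{-1}$ satisfies $\|\Dx^{-1}V_1\|_{\dot H^1}=\|V_1\|_{\dot H^{-1}}$, so the Schr\"odinger data lie in $\dot H^1$ with
$$\|U_1(0)\|_{\dot H^1}\le\|V_0\|_{\dot H^1}+\|V_1\|_{\dot H^{-1}}=\|\xi_V(0)\|_{\dot H^1\times\dot H^{-1}},$$
and the forcing $H$ already lies in $L^1_{loc}(\R,\dot H^1)$ by \eqref{1.plate-ini}.

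Applying Proposition \ref{Prop1.Str-schro} to each of $U_1$ and $U_2$ yields
$$\|U_j\|_{L^4(-T,T;L^\infty)}\le C_T\(\|\xi_V(0)\|_{\dot H^1\times\dot H^{-1}}+\|H\|_{L^1(-T,T;\dot H^1)}\),\quad j=1,2,$$
so the triangle inequality combined with $V=\mathrm{Re}\,U_1+\mathrm{Im}\,U_2$ produces the $L^4(L^\infty)$ part of \eqref{1.plate-stri}; the energy part is exactly \eqref{1.en-plate}, which is a standard consequence of the identity \eqref{1.plate-en}. To upgrade $L^\infty(\R^3)$ to $C(\R^3)$ (as stated in the assertion $V\in L^4_{loc}(\R,C(\R^3))$) one simply invokes Corollary \ref{Cor1.schro-cont} in place of Proposition \ref{Prop1.Str-schro} for $U_1,U_2$.

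The main obstacle is of a purely technical nature: justifying the formal manipulations (in particular the meaning of $\Dx^{-1}V_1$ and the identity $V=\mathrm{Re}\,U_1+\mathrm{Im}\,U_2$ for rough data). This is routine and can be handled exactly as in the proof of Corollary \ref{Cor1.schro-cont}: smooth $V_0,V_1,H$ via the Fourier cut-off $P_N$ defined in \eqref{1.pro}, perform the decomposition at the smooth level where everything is classical, and pass to the limit $N\to\infty$ using the Strichartz bound \eqref{1.schro-stri} itself to secure convergence in $L^4(-T,T;L^\infty)$ and in $C(-T,T;\dot H^1\times\dot H^{-1})$.
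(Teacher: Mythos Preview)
Your proof is correct and follows essentially the same approach as the paper: both rewrite the Duhamel formula \eqref{1.plate-var} via the Euler identities $\cos z=\tfrac12(e^{iz}+e^{-iz})$, $\sin z=\tfrac1{2i}(e^{iz}-e^{-iz})$, reduce each resulting term to a Schr\"odinger evolution covered by Proposition~\ref{Prop1.Str-schro} (and Corollary~\ref{Cor1.schro-cont} for the $C(\R^3)$ part), and read off the energy bound from \eqref{1.en-plate}. Your packaging into $U_1,U_2$ and the explicit approximation argument via $P_N$ are just a more detailed write-up of what the paper does in one sentence.
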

\begin{proof}
Indeed, using that $\sin(z)=\frac1{2i}(e^{iz}-e^{-iz})$ and $\cos(z)=\frac12(e^{iz}+e^{-iz})$ together with formula \eqref{1.schro-var} and Proposition \ref{Prop1.Str-schro}, we see that every term in \eqref{1.plate-var} belongs to $L^4(-T,T;C(\R^3))$, so the Strichartz part of estimate \eqref{1.plate-stri} holds. The energy part of it follows from \eqref{1.en-plate} and the corollary is proved.
\end{proof}
We now turn to the linear hyperbolic CHO equation of the form
\begin{equation}\label{1.lin-CHO}
\Dt^2 u+\Dt u+\Dx(\Dx u-H(t))+\alpha u=0,\ \ \xi_u\big|_{t=0}=\xi_0,
\end{equation}
where $\alpha>0$ is a given parameter. The energy equality for this equation formally reads
\begin{equation}\label{1.LCHO-en}
\frac12\frac d{dt}(\|\Dt u\|^2_{\dot H^{-1}}+\|u\|^2_{\dot H^1}+\alpha \|u\|^2_{\dot H^{-1}})+\|\Dt u\|^2_{\dot H^{-1}}=-(H(t),\Dt u).
\end{equation}
This guesses that the phase space for the energy solutions should be
\begin{equation}\label{1.phase}
\Cal E:=(\dot H^1\cap \dot H^{-1})\times\dot H^{-1},\ \|\xi_u\|_{\Cal E}^2:=\|\Dt u\|^2_{\dot H^{-1}}+\|u\|^2_{\dot H^1}+\alpha \|u\|^2_{\dot H^{-1}}
\end{equation}
and the energy solution of \eqref{1.lin-CHO} is a function $\xi_u\in C(0,T;\Cal E)$ which satisfies the equation as an equality in $\dot H^{-1}+\dot H^{-3}$. The next proposition gives the existence of such solution as well as the validity of the energy identity \eqref{1.LCHO-en}.

\begin{proposition}\label{Prop1.en} Let $H\in L^1(0,T;\dot H^1)$ and the initial data $\xi_0\in \Cal E$. Then, there exists a unique energy solution $u$ of equation \eqref{1.lin-CHO} and the energy identity \eqref{1.LCHO-en} holds for almost all $t\ge0$.
\end{proposition}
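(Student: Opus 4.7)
The plan is a Faedo--Galerkin argument built on the Fourier truncation $P_N$ from \eqref{1.pro}. Set $u_{0,N}:=P_Nu_0$, $u_{1,N}:=P_Nu_1$, $H_N:=P_NH$ and look for $u_N$ with $\hat u_N$ supported in $\{N^{-1}\leq|\xi|\leq N\}$ solving
\begin{equation*}
\Dt^2u_N+\Dt u_N+\Dx(\Dx u_N-H_N)+\a u_N=0,\qquad \xi_{u_N}\big|_{t=0}=(u_{0,N},u_{1,N}).
\end{equation*}
On the Fourier side this decouples, for each fixed $\xi$ in the annulus, into the scalar linear ODE $\Dt^2\hat u+\Dt\hat u+(|\xi|^4+\a)\hat u=-|\xi|^2\hat H$, which admits a unique $W^{2,1}_{\mathrm{loc}}$-in-$t$ solution once the right-hand side is $L^1$ in time. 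Since the spectrum of $u_N$ lives in a bounded annulus away from zero, $u_N$ is smooth in $x$ and belongs to every $\dot H^s$, so the equation can be paired with $(-\Dx)^{-1}\Dt u_N$ classically. That pairing produces rigorously the energy identity
\begin{equation*}
\tfrac12\tfrac{d}{dt}\|\xi_{u_N}\|_{\Cal E}^2+\|\Dt u_N\|_{\dot H^{-1}}^2=-(H_N,\Dt u_N),
\end{equation*}
and, bounding the right-hand side by $\|H_N\|_{\dot H^1}\|\xi_{u_N}\|_{\Cal E}$ and integrating, the $N$-uniform estimate $\|\xi_{u_N}(t)\|_{\Cal E}\leq\|\xi_{u_N}(0)\|_{\Cal E}+\|H_N\|_{L^1(0,t;\dot H^1)}$.

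The decisive observation is that the equation is linear, so the same identity applied to the difference $u_N-u_M$ (which solves the truncated problem with data $P_Nu_0-P_Mu_0$, $P_Nu_1-P_Mu_1$ and forcing $P_NH-P_MH$) shows that $\{u_N\}$ is Cauchy in $C(0,T;\Cal E)$ as soon as $P_Nu_0\to u_0$ in $\dot H^1\cap\dot H^{-1}$, $P_Nu_1\to u_1$ in $\dot H^{-1}$ and $P_NH\to H$ in $L^1(0,T;\dot H^1)$, all of which follow from dominated convergence on the Fourier side. The limit $u\in C(0,T;\Cal E)$ satisfies the equation distributionally with values in $\dot H^{-1}+\dot H^{-3}$ since each of the terms $\Dt^2u_N$, $\Dt u_N$, $\Dx^2u_N$, $\Dx H_N$, $\a u_N$ converges in the appropriate space of distributions. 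To recover the identity at the limit I would integrate the identity for $u_N$ from $0$ to $t$ and then use the strong $C(0,T;\Cal E)$ convergence (which in particular implies $\Dt u_N\to\Dt u$ in $L^2(0,T;\dot H^{-1})$) to pass to the limit in each term, obtaining the integrated version of \eqref{1.LCHO-en}; differentiating in $t$ yields the identity a.e. Uniqueness is then immediate from the same estimate applied to the difference of two hypothetical energy solutions with identical data.

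The main obstacle, and the reason for routing through the Galerkin scheme rather than testing the equation directly, is that at the regularity of a bare energy solution $(u,\Dt u)\in C(0,T;\Cal E)$ the formal test function $(-\Dx)^{-1}\Dt u$ is not obviously an admissible distributional pairing; making sense of $\langle\Dt^2u,(-\Dx)^{-1}\Dt u\rangle$ directly would ordinarily require a Lions-type mollification in time. Truncating in frequency sidesteps this entirely by rendering everything classical at the approximate level, and then the linearity of the equation together with the Cauchy property of $\{u_N\}$ in $\Cal E$ is enough to transfer the identity to the limit.
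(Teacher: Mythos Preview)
Your argument is correct and coincides with the paper's: both use the Fourier truncation $P_N$ to produce smooth-in-$x$ approximants for which the energy identity is classical, then pass to the limit in the integrated identity; the paper phrases it as applying $P_N$ to a given energy solution and relegates existence to a one-line reference to the same Galerkin scheme, whereas you build existence and the identity simultaneously, but the mechanism is identical. One small remark: your uniqueness sentence implicitly requires the energy identity for an \emph{arbitrary} energy solution (not just the Galerkin limit), which is obtained exactly as in the paper by applying $P_N$ to that solution and repeating the limit argument---you may want to make this explicit.
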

\begin{proof} Note that $H\in L^1(0,T;\dot H^1)$ and $\Dt u\in L^\infty(0,T;\dot H^{-1})$, so the term $(H,\Dt u)$ is well-defined for any energy solution. However, in order to obtain \eqref{1.LCHO-en}, we need to multiply the equation by $\Dx^{-1}\Dt u$ and the terms $(\Dt^2u,\Dx^{-1}\Dt u)$ and $(\Dx u,\Dt u)$ are a priori not well-defined, so we need to use the standard approximation arguments for justifying it. Namely, let the projector $P_N$ be the same as in the proof of Corollary \ref{Cor1.schro-cont} and $u_N:=P_N u$. Then, this function is smooth in $x$ and solves equation \eqref{1.lin-CHO} with $H$ replaced by $H_N:=P_NH$. Thus, writing the energy identity in the equivalent integral form for $u_N$, we have
\begin{equation}\label{1.en-app}
\frac12(\|\xi_{u_N}(t)\|^2_{\Cal E}-\|\xi_{u_N}(s)\|^2_{\Cal E})=-\int_{s}^t(H_N(\tau),\Dt u_N(\tau))+\|\Dt u_N(\tau)\|^2_{\dot H^{-1}}\,d\tau,
\end{equation}
for all $0\le s\le t$. Passing to the limit $N\to\infty$ in this relation, we end up with the integral equality equivalent to \eqref{1.LCHO-en}. Thus, the energy identity is verified and the existence of a solution can be then proved using e.g. the Galerkin approximation scheme based on the projectors $P_N$, so the proposition is also proved.
\end{proof}
The next corollary gives the {\it dissipative} estimate for the solutions of \eqref{1.lin-CHO}.

\begin{corollary}\label{Cor1.lin-endis} Let the assumptions of Proposition \ref{Prop1.en} hold. Then, the solution $u$ of equation \eqref{1.lin-CHO}
satisfies the following estimate:
\begin{equation}\label{1.lin-dis}
\|\xi_u(t)\|_{\Cal E}\le C\|\xi_u(0)\|_{\Cal E}e^{-\beta t}+C\int_0^te^{-\beta(t-s)}\|H(s)\|_{\dot H^1}\,ds,
\end{equation}
where the positive constants $C$ and $\beta$ are independent of $u$ and $t$.
\end{corollary}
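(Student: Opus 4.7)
The energy identity \eqref{1.LCHO-en} supplies dissipation only in the velocity component $\|\Dt u\|_{\dot H^{-1}}$, so to extract exponential decay of the full $\Cal E$-norm I would introduce the modified Lyapunov functional
\begin{equation*}
\tilde E(t):=\tfrac12\|\xi_u(t)\|^2_{\Cal E}-\delta\,\bigl(\Dt u(t),\,\Dx^{-1}u(t)\bigr),
\end{equation*}
with a small parameter $\delta>0$ to be fixed below. This is the standard device for damped second-order evolution equations: the cross term transfers dissipation from the velocity into the position variable. The multiplier $\Dx^{-1}u$ (rather than $u$ itself) is dictated by the fact that every norm appearing in $\|\xi_u\|_{\Cal E}$ carries a $\dot H^{-1}$ weight. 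A one-line Fourier-side Cauchy--Schwarz argument gives $|(\Dt u,\Dx^{-1}u)|\le\|\Dt u\|_{\dot H^{-1}}\|u\|_{\dot H^{-1}}$, whence for $\delta$ small enough (depending on $\alpha$) one has the equivalence $c_1\|\xi_u\|^2_{\Cal E}\le\tilde E\le c_2\|\xi_u\|^2_{\Cal E}$.

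Next I would differentiate $\tilde E$. Combining the energy identity \eqref{1.LCHO-en} for the first summand with the multiplier identity obtained by pairing \eqref{1.lin-CHO} against $\Dx^{-1}u$, and using the elementary Fourier-side identities $(\Dx^2 u,\Dx^{-1}u)=-\|u\|^2_{\dot H^1}$, $(u,\Dx^{-1}u)=-\|u\|^2_{\dot H^{-1}}$, $(\Dx H,\Dx^{-1}u)=(H,u)$ and $(\Dt u,\Dx^{-1}\Dt u)=-\|\Dt u\|^2_{\dot H^{-1}}$, one arrives at
\begin{multline*}
\frac{d\tilde E}{dt}=-(1-\delta)\|\Dt u\|^2_{\dot H^{-1}}-\delta\|u\|^2_{\dot H^1}-\delta\alpha\|u\|^2_{\dot H^{-1}}\\
+\delta(\Dt u,\Dx^{-1}u)-(H,\Dt u)-\delta(H,u).
\end{multline*}
For $\delta<1$ small, the first line is a coercive negative-definite quadratic form; the indefinite cross term $\delta(\Dt u,\Dx^{-1}u)$ is absorbed by it; and the forcing contributions are controlled by the duality bounds $|(H,\Dt u)|\le\|H\|_{\dot H^1}\|\Dt u\|_{\dot H^{-1}}$ and $|(H,u)|\le\|H\|_{\dot H^1}\|u\|_{\dot H^{-1}}$. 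Together these yield
\begin{equation*}
\frac{d\tilde E}{dt}+2\beta\tilde E\le C\|H(t)\|_{\dot H^1}\sqrt{\tilde E(t)}
\end{equation*}
for suitable $\beta,C>0$. Setting $y:=\sqrt{\tilde E}$ reduces this to the scalar linear differential inequality $y'+\beta y\le\tfrac{C}{2}\|H\|_{\dot H^1}$, whose integration yields $y(t)\le y(0)e^{-\beta t}+\tfrac{C}{2}\int_0^t e^{-\beta(t-s)}\|H(s)\|_{\dot H^1}\,ds$. Restoring the equivalence $\tilde E\asymp\|\xi_u\|^2_{\Cal E}$ then gives exactly \eqref{1.lin-dis}.

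The single delicate point is the rigorous justification of the multiplier $\Dx^{-1}u$: at the energy-solution level the pairing $(\Dt^2 u,\Dx^{-1}u)$ is not a priori defined, so one cannot simply compute $\frac{d}{dt}(\Dt u,\Dx^{-1}u)$ term by term. I would handle this exactly as in the proof of Proposition~\ref{Prop1.en}, by first establishing the identity in integrated form for the spectrally truncated solutions $u_N:=P_N u$ (for which all quantities are smooth), and then passing to the limit $N\to\infty$ using the convergences $\xi_{u_N}\to\xi_u$ in $C(0,T;\Cal E)$ and $H_N\to H$ in $L^1(0,T;\dot H^1)$. Apart from this approximation step and the modest book-keeping required to choose $\delta$ simultaneously small enough for coercivity of $\tilde E$, for absorption of the cross term, and for strict negativity of the residual quadratic form, the argument is routine.
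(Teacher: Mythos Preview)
Your proof is correct and follows essentially the same route as the paper: a modified energy functional built by adding the cross term $\delta(\Dt u,-\Dx^{-1}u)$ to $\tfrac12\|\xi_u\|^2_{\Cal E}$, the resulting differential inequality, and Gronwall. The only cosmetic difference is that the paper also includes $\tfrac\delta2\|u\|^2_{\dot H^{-1}}$ in its functional $\Cal E_u$, which absorbs your leftover term $\delta(\Dt u,\Dx^{-1}u)$ directly under the time derivative rather than treating it as a cross term to be dominated; both variants work equally well.
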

\begin{proof} Indeed, multiplying equation \eqref{1.lin-CHO} by $\delta\Dx^{-1} u$, where $\delta>0$ is sufficiently small (which is allowed since $u\in \dot H^1\cap \dot H^{-1}$), we have
\begin{equation}
\delta\frac d{dt}((-\Dx^{-1}\Dt u,u)+\frac12\|u\|^2_{\dot H^{-1}})-\delta\|\Dt u\|^2_{\dot H^{-1}}+\delta\|u\|^2_{\dot H^{1}}+\alpha\delta\|u\|^2_{\dot H^{-1}}=-\delta(H,u).
\end{equation}
Taking a sum of this relation with the energy identity \eqref{1.LCHO-en}, we get
\begin{multline}\label{1.lin-big}
\frac d{dt}(\frac12\|\xi_u\|^2_{\Cal E}+\delta(\Dt u,-\Dx^{-1}u)+\frac\delta2\|u\|^2_{\dot H^{-1}})+\\+(1-\delta)\|\Dt u\|^2_{\dot H^{-1}}+\delta\|u\|^2_{\dot H^1}+\alpha\delta\|u\|^2_{\dot H^{-1}}=-(H,\Dt u+\delta u).
\end{multline}
Introducing $\Cal E_u(t):=\frac12\|\xi_u\|^2_{\Cal E}+\delta(\Dt u,-\Dx^{-1}u)+\frac\delta2\|u\|^2_{\dot H^{-1}}$, we see that for sufficiently small $\delta>0$,
\begin{equation}
C^{-1}\|\xi_u(t)\|^2_{\Cal E}\le \Cal E_u(t)\le C\|\xi_u(t)\|^2_{\Cal E}
\end{equation}
for some $C>0$. On the other hand, it follows from \eqref{1.lin-big} that, for sufficiently small $\beta$,
\begin{equation}
\frac d{dt}\Cal E_u(t)+\beta\Cal E_u(t)\le C\|H(t)\|_{\dot H^1}\Cal E_u(t)^{1/2}
\end{equation}
and the Gronwall inequality gives the desired estimate \eqref{1.lin-dis} and finishes the proof of the corollary.
\end{proof}
The next corollary combines the obtained energy estimate with the Strichartz estimate for the plate equation.

\begin{corollary}\label{Cor1.lin-Str} Let the assumptions of Proposition \ref{Prop1.en} hold. Then, the solution $u$ of equation \eqref{1.lin-CHO}
belongs to $L^4(0,T;C(\R^3))$ for all $T>0$ and satisfies the following estimate:
\begin{multline}\label{1.lin-str}
\|\xi_u(t)\|_{\Cal E}+\|u\|_{L^4(\max\{0,t-1\},t;L^\infty)}\le\\\le C\|\xi_u(0)\|_{\Cal E}e^{-\beta t}+C\int_0^te^{-\beta(t-s)}\|H(s)\|_{\dot H^1}\,ds,
\end{multline}
where the positive constants $C$ and $\beta$ are independent of $u$ and $t$.
\end{corollary}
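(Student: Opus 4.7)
The plan is to reduce equation \eqref{1.lin-CHO} to a plate equation of the form \eqref{1.plate} with a modified right-hand side, then apply Corollary \ref{Cor1.Str-plate} on unit time intervals and close the estimate with the dissipative bound \eqref{1.lin-dis} already established in Corollary \ref{Cor1.lin-endis}.

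Concretely, rewrite \eqref{1.lin-CHO} as
\begin{equation}\label{plan-1}
\Dt^2 u+\Dx^2 u=\Dx \tilde H,\qquad \tilde H(t):=H(t)-\Dx^{-1}\bigl(\Dt u(t)+\alpha u(t)\bigr).
\end{equation}
Since $\Dt u(t)\in\dot H^{-1}$ and $u(t)\in\dot H^{-1}$ for every energy solution, the operator $\Dx^{-1}:\dot H^{-1}\to\dot H^1$ gives $\tilde H(t)\in\dot H^1$, with the pointwise bound
\begin{equation}\label{plan-2}
\|\tilde H(t)\|_{\dot H^1}\le \|H(t)\|_{\dot H^1}+\|\Dt u(t)\|_{\dot H^{-1}}+\alpha\|u(t)\|_{\dot H^{-1}}\le \|H(t)\|_{\dot H^1}+C\|\xi_u(t)\|_{\Cal E}.
\end{equation}

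Next, fix $t\ge 0$ and set $a:=\max\{0,t-1\}$. Viewing $u$ on $[a,t]$ as the solution of the plate equation \eqref{1.plate} with source $\Dx\tilde H$ and initial data $\xi_u(a)$ (of uniformly bounded length $\le 1$), Corollary \ref{Cor1.Str-plate} yields
\begin{equation}\label{plan-3}
\|u\|_{L^4(a,t;L^\infty)}\le C\bigl(\|\xi_u(a)\|_{\dot H^1\times\dot H^{-1}}+\|\tilde H\|_{L^1(a,t;\dot H^1)}\bigr)
\le C\|\xi_u(a)\|_{\Cal E}+C\int_a^t\bigl(\|H(s)\|_{\dot H^1}+\|\xi_u(s)\|_{\Cal E}\bigr)\,ds,
\end{equation}
where the constant $C$ is independent of $t$ because the interval length is $\le 1$ and we have used \eqref{plan-2} and the embedding $\Cal E\hookrightarrow \dot H^1\times\dot H^{-1}$.

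It remains to insert the dissipative energy estimate \eqref{1.lin-dis} into \eqref{plan-3}. Applying \eqref{1.lin-dis} to bound $\|\xi_u(a)\|_{\Cal E}$ and $\|\xi_u(s)\|_{\Cal E}$ for $s\in[a,t]$, and using that $e^{-\beta(a-\sigma)}\le C e^{-\beta(t-\sigma)}$ uniformly in the unit shift, the Strichartz contribution becomes
\begin{equation}\label{plan-4}
\|u\|_{L^4(a,t;L^\infty)}\le C\|\xi_u(0)\|_{\Cal E}e^{-\beta t}+C\int_0^t e^{-\beta(t-s)}\|H(s)\|_{\dot H^1}\,ds,
\end{equation}
after possibly shrinking $\beta>0$. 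Adding \eqref{plan-4} to \eqref{1.lin-dis} gives \eqref{1.lin-str}. The $L^4(\,\cdot\,;C(\R^3))$ regularity follows in parallel from Corollary \ref{Cor1.Str-plate} applied to the smooth approximations $P_N u$, exactly as in Corollary \ref{Cor1.schro-cont}.

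The only nontrivial step is verifying that the plate equation reformulation \eqref{plan-1} has an admissible right-hand side, i.e.\ that $\Dx^{-1}(\Dt u+\alpha u)\in\dot H^1$; this is precisely where the choice of the phase space $\Cal E$ with the $\dot H^{-1}$ component built in pays off. Once \eqref{plan-1}--\eqref{plan-2} are in place, the remainder is a routine chaining of the plate Strichartz estimate with the exponentially decaying energy bound on unit intervals.
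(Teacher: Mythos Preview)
Your proof is correct and follows essentially the same approach as the paper: rewrite the linear hyperbolic CHO equation as a plate equation with modified source $\tilde H(t)=H(t)-\Dx^{-1}(\Dt u(t)+\alpha u(t))$, apply the plate Strichartz estimate (Corollary~\ref{Cor1.Str-plate}) on a unit interval starting at $a=\max\{0,t-1\}$, and close with the dissipative energy estimate \eqref{1.lin-dis}. Your write-up is in fact slightly more explicit than the paper's, in that you spell out the pointwise bound \eqref{plan-2} and the $L^4(\cdot;C(\R^3))$ regularity via approximation, but the underlying argument is the same.
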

\begin{proof} We interpret the hyperbolic CHO equation as a linear plate equation
\begin{equation}\label{1.l-plate}
\Dt^2 u+\Dx^2 u=\Dx \tilde H(t),\ \ \tilde H(t):=H(t)-\Dx^{-1}(\Dt u(t)+\alpha u(t)).
\end{equation}
Then, due to the energy estimate,
\begin{equation}
\|\tilde H\|_{L^1(\max\{0,t-1\},t;\dot H^1)}\le C(\|H\|_{L^1(\max\{0,t-1\},t;\dot H^1)}+\|\xi_u(\max\{0,t-1\})\|_{\Cal E})
\end{equation}
and, due to the Strichartz estimate \eqref{1.plate-stri} for the plate equation \eqref{1.l-plate}, we have
\begin{equation}
\|u\|_{L^4(\max\{0,t-1\},t;L^\infty)}\le C(\|H\|_{L^1(\max\{0,t-1\},t;\dot H^1)}+\|\xi_u(\max\{0,t-1\})\|_{\Cal E})
\end{equation}
and the desired estimate \eqref{1.lin-str} is an immediate corollary of this estimate and the dissipative energy estimate \eqref{1.lin-dis}. Thus, the corollary is proved.
\end{proof}

\section{The non-linear equation: global well-posedness and dissipativity}\label{s3}
This section is devoted to study the nonlinear hyperbolic CHO equation
\begin{equation}\label{2.CHO}
\Dt^2 u+\Dt u+\Dx(\Dx u-f(u)+g)+\alpha u=0,\ \ \xi_u\big|_{t=0}=\xi_0.
\end{equation}
Remind that the initial data is assumed to belong to the energy space $\Cal E=(\dot H^1\cap\dot H^{-1})\times \dot H^1$, the external force
$g$ lives in the space $\dot H^1$ and the nonlinearity $f\in C^2(\R,\R)$ satisfies the following dissipativity and growth restrictions
\begin{equation}\label{2.f}
\begin{cases}
1.\ \ f(u).u\ge0;\\
2.\ \ F(u)\le Lf(u).u+K|u|^2;\\
3.\ \ |f''(u)|\le C(1+|u|^{3-\kappa});
\end{cases}
\end{equation}
where all constants involved are positive and $0<\kappa\le 3$ and $F(u):=\int_0^uf(v)\, dv$.
\par
We start with defining the proper class of solutions for equation \eqref{2.CHO}.
\begin{definition}\label{Def2.sol} A function $u=u(t,x)$ is a {\it Strichartz} solution of problem \eqref{2.CHO} if, for any $T>0$,
\begin{equation}
\xi_u\in C(0,T;\Cal E),\ \ u\in L^4(0,T;C(\R^3))
\end{equation}
and equation \eqref{2.CHO} is satisfied as an equality in $\dot H^{-1}+\dot H^{-3}$.
\end{definition}
Note that, due to the growth restriction on $f$, we have
\begin{equation}
\|f(u)\|_{H^1}\le C(1+\|u\|^4_{L^\infty})(1+\|u\|_{H^1}),
\end{equation}
so, using the obvious embeddings
\begin{equation}\label{2.emb}
H^1(\R^3)\subset \dot H^1(\R^3),\ \ \dot H^{-1}(\R^3)\cap \dot H^1(\R^3)\subset H^1(\R^3),
\end{equation}
we see that $f(u)\in L^1(0,T; \dot H^1(\R^3))$ for any Strichartz solution $u$ of problem \eqref{2.CHO}. Thus, we may interpret the terms $f(u)+g$ as the external force for the linear CHO equation \eqref{1.lin-CHO}. Then, due to Proposition \ref{Prop1.en}, we have the energy identity
\begin{equation}
\frac12\frac d{dt}\|\xi_u\|^2_{\Cal E}+\|\Dt u\|^2_{\dot H^{-1}}=-(f(u),\Dt u)+(g,\Dt u)=-\frac d{dt}((F(u),1)-(g,u)).
\end{equation}
Therefore, any Strichartz solution of problem \eqref{2.CHO} satisfied the energy identity
\begin{equation}\label{2.energy}
\frac12\frac d{dt}\(\|\xi_u\|^2_{\Cal E}+(F(u),1)-(g,u)\)+\|\Dt u\|^2_{\dot H^{-1}}=0.
\end{equation}
Note also that, due to the first assumption of \eqref{2.f}, $f(0)=0$, so, taking into the account the third assumption, we see that
\begin{equation}\label{2.F}
|F(u)|+f(u).u\le C(|u|^2+|u|^6)
\end{equation}
and the terms $(F(u),1)$ and $(f(u),u)$ are well-defined for any energy solution $u$. The next proposition gives the analogue of dissipative energy estimate for the nonlinear hyperbolic CHO equation.
\begin{proposition}\label{Prop2.en-dis} Let the assumptions \eqref{2.f} hold, the external force $g\in\dot H^1(\R^3)$ and $u$ be a Strichartz solution of the hyperbolic CHO equation \eqref{2.CHO}. Then, the following dissipative energy estimate holds:
\begin{equation}\label{2.en-dis}
\|\xi_u(t)\|_{\Cal E}\le Q(\|\xi_u(0)\|_{\Cal E})e^{-\beta t}+Q(\|g\|_{\dot H^1}), \ \ t\ge0,
\end{equation}
where the positive constant $\beta$ and monotone function $Q$ are independent of $t\ge0$ and the solution $u$.
\end{proposition}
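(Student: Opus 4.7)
The plan is to mirror the Lyapunov-function argument of Corollary~\ref{Cor1.lin-endis}, now incorporating the nonlinearity through the dissipativity conditions in~\eqref{2.f}. Since $u$ is a Strichartz solution, the bound preceding~\eqref{2.emb} gives $f(u)\in L^1(0,T;\dot H^1)$, so Proposition~\ref{Prop1.en} applies with $H=f(u)-g$ and rigorously yields the energy identity~\eqref{2.energy}. The second ingredient is obtained by multiplying~\eqref{2.CHO} by $-\delta\Dx^{-1}u$ for a small $\delta>0$; the manipulation is legitimised by the same $P_N$-regularisation used in Proposition~\ref{Prop1.en}, noting that $u\in\dot H^1\cap\dot H^{-1}$ makes $-\Dx^{-1}u$ an admissible test function and that $(f(u),u)$ is well-defined via $|f(u)u|\le C(u^2+u^6)$ and $H^1\subset L^2\cap L^6$.

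Adding $\delta$ times this second identity to~\eqref{2.energy} produces a differential identity for the Lyapunov functional
\[
\Cal L_u(t):=\tfrac12\|\xi_u(t)\|^2_{\Cal E}+(F(u),1)-(g,u)+\delta(\Dt u,-\Dx^{-1}u)+\tfrac\delta2\|u\|^2_{\dot H^{-1}},
\]
whose dissipation side is $(1-\delta)\|\Dt u\|^2_{\dot H^{-1}}+\delta\|u\|^2_{\dot H^1}+\alpha\delta\|u\|^2_{\dot H^{-1}}+\delta(f(u),u)$ and whose forcing is $\delta(g,u)$. For $\delta$ small, the cross term $\delta(\Dt u,-\Dx^{-1}u)$ and the $(g,u)$ contribution are absorbed by Young's inequality, and since $F(u)\ge0$ (from $f(u)u\ge0$ and $f(0)=0$) one obtains two-sided bounds $c\|\xi_u\|^2_{\Cal E}+(F(u),1)-C\|g\|^2_{\dot H^1}\le\Cal L_u\le C\|\xi_u\|^2_{\Cal E}+(F(u),1)+C\|g\|^2_{\dot H^1}$.

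It remains to convert the dissipation into a Gronwall term $\beta\Cal L_u$. The growth bound $F(u)\le Lf(u)u+K|u|^2$ from~\eqref{2.f} lets us absorb $\beta(F(u),1)$ into $\delta(f(u),u)$ (choosing $\beta<\delta/L$), with the leftover $\beta K\|u\|_{L^2}^2$ controlled via the interpolation $\|u\|_{L^2}^2\le\|u\|_{\dot H^1}\|u\|_{\dot H^{-1}}$ and thereby swallowed by $\delta\|u\|^2_{\dot H^1}+\alpha\delta\|u\|^2_{\dot H^{-1}}$. A further Young estimate on $\delta(g,u)$ then yields $\frac d{dt}\Cal L_u+\beta\Cal L_u\le C\|g\|^2_{\dot H^1}$, and the Gronwall inequality combined with the two-sided bounds on $\Cal L_u$ (and $(F(u_0),1)\le C(\|u_0\|^2_{L^2}+\|u_0\|^6_{L^6})$ plus the Sobolev embedding $H^1\subset L^6$ at the initial time) delivers the announced estimate. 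The principal technical obstacle, as in the linear case, is the rigorous justification of the $-\Dx^{-1}u$ multiplication: this is handled by performing the calculation on the regularised solutions $u_N:=P_Nu$ and passing to the limit using the energy bound to control the remainders.
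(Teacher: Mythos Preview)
Your proposal is correct and follows essentially the same route as the paper: multiply \eqref{2.CHO} by $-\Dx^{-1}(\Dt u+\delta u)$, build a Lyapunov functional equivalent to $\tfrac12\|\xi_u\|^2_{\Cal E}+(F(u),1)$, use assumption~2 of \eqref{2.f} together with the interpolation $\|u\|_{L^2}^2\le\|u\|_{\dot H^1}\|u\|_{\dot H^{-1}}$ to close the Gronwall inequality, and justify the formal multiplications via the $P_N$-regularisation. The only cosmetic difference is that you absorb the term $-(g,u)$ into the Lyapunov functional $\Cal L_u$, whereas the paper keeps $-(g,\Dt u+\delta u)$ on the right-hand side of \eqref{2.big}; this changes nothing of substance.
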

\begin{proof} As in the proof of Corollary \ref{Cor1.lin-endis}, we multiply equation \eqref{2.CHO} by $\Dx^{-1}(\Dt u+\delta u)$ (the multiplication on $\Dx^{-1}u$ is allowed and the multiplication on $\Dx^{-1}\Dt u$ is justified above) and get
\begin{multline}\label{2.big}
\frac d{dt}(\frac12\|\xi_u\|^2_{\Cal E}+\delta(\Dt u,-\Dx^{-1}u)+\frac\delta2\|u\|^2_{\dot H^{-1}}+(F(u),1))+\\+(1-\delta)\|\Dt u\|^2_{\dot H^{-1}}+\delta\|u\|^2_{\dot H^1}+\alpha\delta\|u\|^2_{\dot H^{-1}}+\delta(f(u),u)=-(g,\Dt u+\delta u).
\end{multline}
Using the first assumption of \eqref{2.f}, we see that $F(u)\ge0$ and together with \eqref{2.F}, we have
\begin{equation}\label{2.eneq}
C^{-1}\|\xi_u(t)\|_{\Cal E}^2\le \bar {\Cal E}_u(t)\le Q(\|\xi_u(t)\|_{\Cal E}),
\end{equation}
where $\bar{\Cal E}_u(t):= \frac12\|\xi_u(t)\|^2_{\Cal E}+\delta(\Dt u(t),-\Dx^{-1}u(t))+\frac\delta2\|u(t)\|^2_{\dot H^{-1}}+(F(u(t)),1)$ and $\delta>0$ is small enough. On the other hand, using the second assumption of \eqref{2.f}, we deduce from \eqref{2.eneq} that for sufficiently small $\beta>0$,
\begin{equation}\label{2.gron}
\frac d{dt}\bar{\Cal E}_u(t)+\beta\bar{\Cal E}_u(t)\le C\|g\|^2_{\dot H^1}
\end{equation}
and the Gronwall inequality applied to this relation gives the desired estimate \eqref{2.en-dis} and finishes the proof of the proposition.
\end{proof}
As the next step, we want to obtain the analogue of the dissipative estimate \eqref{2.en-dis} for the Strichartz norm of the solution $u$. To this end, we need the following key result.
\begin{proposition}\label{Prop2.str-en} Let the assumptions of Proposition \ref{Prop2.en-dis} hold and let $u$ be a Strichartz solution of problem \eqref{2.CHO}. Then, the following estimate is valid:
\begin{equation}\label{2.str-en}
\|u\|_{L^4(0,1;L^\infty)}\le Q(\|\xi_u(0)\|_{\Cal E})+Q(\|g\|_{\dot H^1}),
\end{equation}
where $Q$ is a monotone function which is independent of $u$.
\end{proposition}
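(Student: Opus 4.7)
The strategy is to view \eqref{2.CHO} as the linear hyperbolic CHO equation \eqref{1.lin-CHO} with external force $\tilde H(t):=g-f(u(t))$, apply the linear Strichartz estimate from Corollary \ref{Cor1.lin-Str}, and then close the resulting nonlinear inequality via a short-time bootstrap on the Strichartz norm. The sub-quintic gap $\kappa>0$ will appear as a decisive small power of the length of the bootstrap interval.

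From Proposition \ref{Prop2.en-dis} I first obtain the uniform energy bound $\|\xi_u(t)\|_{\Cal E}\le E_0:=Q(\|\xi_u(0)\|_{\Cal E})+Q(\|g\|_{\dot H^1})$ for $t\in[0,1]$. On any subinterval $[a,a+h]\subset[0,1]$ with $h\le 1$, Corollary \ref{Cor1.lin-Str}, applied with initial time shifted to $a$ and forcing $\tilde H=g-f(u)$, gives
\begin{equation*}
\|u\|_{L^4(a,a+h;L^\infty)}\le C\|\xi_u(a)\|_{\Cal E}+C h\|g\|_{\dot H^1}+C\int_a^{a+h}\|f(u(s))\|_{\dot H^1}\,ds.
\end{equation*}
Integrating the third assumption of \eqref{2.f} yields $|f'(u)|\le C(1+|u|^{4-\kappa})$, hence by the chain rule $\|\nabla f(u)\|_{L^2}\le C(1+\|u\|_{L^\infty}^{4-\kappa})\|\nabla u\|_{L^2}$. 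H\"older in time with conjugate exponents $4/(4-\kappa)$ and $4/\kappa$ then produces the decisive power of $h$:
\begin{equation*}
\int_a^{a+h}\|f(u(s))\|_{\dot H^1}\,ds\le C h E_0+C E_0 h^{\kappa/4}\|u\|_{L^4(a,a+h;L^\infty)}^{4-\kappa}.
\end{equation*}

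Setting $X(h):=\|u\|_{L^4(a,a+h;L^\infty)}$, the two displays above combine into the scalar bootstrap inequality $X(h)\le A+Bh^{\kappa/4}X(h)^{4-\kappa}$, with $A$ and $B$ depending only on $E_0$ and $\|g\|_{\dot H^1}$. Because $u\in L^4(0,1;C(\R^3))$ is built into the definition of a Strichartz solution, the map $h\mapsto X(h)$ is continuous on $[0,1]$ with $X(0)=0$. Choosing $h_0>0$ so small (depending only on $A$, $B$, $\kappa$) that $Bh_0^{\kappa/4}(2A)^{3-\kappa}\le \tfrac12$, a standard continuation argument forces $X(h)\le 2A$ for all $h\le h_0$. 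Partitioning $[0,1]$ into $N=\lceil 1/h_0\rceil$ consecutive intervals of length at most $h_0$, each starting at a time where $\|\xi_u\|_{\Cal E}\le E_0$, and summing the fourth powers of the local $L^4$-bounds yields \eqref{2.str-en}.

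The main obstacle is the superlinear exponent $4-\kappa\in[1,4)$ appearing on the right of the bootstrap inequality. Without the smallness factor $h^{\kappa/4}$, produced precisely by the strict sub-quintic condition $\kappa>0$, and without the continuity of $X(h)$ at $h=0$ inherent in the Strichartz solution class, the continuation could neither be initiated nor absorbed; the critical quintic case $\kappa=0$ would demand a substantially more delicate argument based on smallness of the $L^4_tL^\infty_x$-norm of $u$ on short time intervals rather than on elementary time-integrability gain.
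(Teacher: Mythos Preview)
Your proof is correct and follows essentially the same route as the paper: treat $f(u)$ as an external force in the linear hyperbolic CHO equation, apply Corollary~\ref{Cor1.lin-Str} on a short interval to get a bootstrap inequality in which the sub-quintic gap $\kappa>0$ produces a small power of the interval length, close it by continuity, and iterate. The only cosmetic difference is that the paper bootstraps the combined quantity $Y_u(\tau)=\|\xi_u\|_{C(0,\tau;\Cal E)}+\|u\|_{L^4(0,\tau;L^\infty)}$ (with $Y_u(0)=\|\xi_u(0)\|_{\Cal E}$), whereas you first invoke Proposition~\ref{Prop2.en-dis} to control the energy norm uniformly and then bootstrap only $X(h)=\|u\|_{L^4(a,a+h;L^\infty)}$ (with $X(0)=0$); both variants lead to the same conclusion.
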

\begin{proof} We treat the non-linearity $f(u)$ in \eqref{2.CHO} as an external force and apply \eqref{1.lin-str} on a small time interval $t\in[0,\tau]$ where $\tau\ll1$ will be fixed later. Then, we get
\begin{multline}
Y_u(\tau):=\|\xi_u\|_{C(0,\tau;\Cal E)}+\|u\|_{L^4(0,\tau;L^\infty)}\le\\\le C(\|\xi_u(0)\|_{\Cal E}+\|g\|_{\dot H^1}+\|f(u)\|_{L^1(0,\tau;\dot H^1)})
\end{multline}
for some constant $C$ which is independent of $u$ and $\tau$. Using the subcritical (subquintic)  growth restriction  for $f(u)$, see \eqref{2.f} assumption 3, together with the fact that $f(0)=0$, we estimate the norm of $f(u)$ as follows:
\begin{multline}\label{2.bigint}
\|f(u)\|_{L^1(0,t;\dot H^1)}\le C\|(1+|u|^{4-\kappa})|\Nx u|\|_{L^1(0,\tau;L^2)}\le\\\le C(\tau+\|u\|_{L^{4-\kappa}(0,\tau;L^\infty)}^{4-\kappa})\|u\|_{L^\infty(0,\tau;\dot H^1)}\le\\
\le C(\tau^\gamma+\tau^\gamma\|u\|_{L^4(0,\tau;L^\infty)}^4)\|\xi_u\|_{L^\infty(0,\tau;\Cal E)}\le C\tau^\gamma(1+Y_u(\tau)^4)Y_u(\tau),
\end{multline}
where $\gamma=\gamma(\kappa)\in(0,3/4]$ and $C$ are independent of $u$ and $\tau$. Thus, we end up with the following relation:
\begin{equation}\label{2.Y}
Y_u(\tau)\le C(\|\xi_u(0)\|_{\Cal E}+\|g\|_{\dot H^1})+ C\tau^\gamma(1+Y_u(\tau)^4)Y_u(\tau).
\end{equation}
Since $Y_u(0)=\|\xi_u(0)\|_{\Cal E}$ and $Y_u(\tau)$ is continuous in $\tau$, estimate \eqref{2.Y} guarantees (see, for example, \cite{sogge-book}, Chapter IV, Lemma 2.2) that there exists $\tau_0=\tau_0(\|\xi_u(0)\|_{\Cal E}+\|g\|_{\dot H^1})$ such that
 \begin{equation}\label{2.YY}
 Y_u(\tau)\le 2C(\|\xi_u(0)\|_{\Cal E}+\|g\|_{\dot H^1}), \ \  \tau\le \tau_0.
 \end{equation}
Important point here is that the constant $C$ is independent of $u$ and $\tau_0$ depends only on the {\it norms} of $\xi_u(0)$ and $g$. This  allows us to obtain the desired estimate \eqref{2.str-en} on a big time interval $t\in[0,1]$ just by iterating \eqref{2.YY} and using that the energy norm of $\xi_u(t)$ is under the control due to the energy estimate \eqref{2.en-dis}. Thus, the proposition is proved.
\end{proof}
\begin{corollary}\label{Cor2.stri-dis} Let the assumptions of Proposition \ref{Prop2.en-dis} hold and  $u$ be a Strichartz solution of problem \eqref{2.CHO}. Then, the following estimate is valid:
\begin{equation}\label{2.stri-dis}
\|\xi_u(t)\|_{\Cal E}+\|u\|_{L^4(t,t+1;L^\infty)}\le Q(\|\xi_u(0)\|_{\Cal E})e^{-\beta t}+Q(\|g\|_{\dot H^1}),
\end{equation}
where the positive constant $\beta$ and the monotone function $u$ are independent of $t$ and $u$.
\end{corollary}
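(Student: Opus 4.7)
The plan is to combine Proposition~\ref{Prop2.en-dis} with Proposition~\ref{Prop2.str-en} by exploiting time translation. Since equation \eqref{2.CHO} is autonomous, if $u$ is a Strichartz solution on $[0,\infty)$ and $t\ge0$, then $v(s,x):=u(t+s,x)$ is again a Strichartz solution with initial data $\xi_v(0)=\xi_u(t)$. Applying \eqref{2.str-en} to $v$ on the interval $s\in[0,1]$ immediately yields
\[
\|u\|_{L^4(t,t+1;L^\infty)}=\|v\|_{L^4(0,1;L^\infty)}\le Q\(\|\xi_u(t)\|_{\Cal E}\)+Q\(\|g\|_{\dot H^1}\),
\]
with $Q$ the monotone function furnished by Proposition~\ref{Prop2.str-en}.

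Next I would substitute the dissipative bound \eqref{2.en-dis} of Proposition~\ref{Prop2.en-dis} into the first argument. Using monotonicity of $Q$ together with the elementary splitting $Q(a+b)\le Q(2a)+Q(2b)$ for $a,b\ge0$ (which holds for any non-negative monotone $Q$), one obtains
\[
\|u\|_{L^4(t,t+1;L^\infty)}\le Q\bigl(2\,Q(\|\xi_u(0)\|_{\Cal E})\,e^{-\beta t}\bigr)+Q\bigl(2\,Q(\|g\|_{\dot H^1})\bigr)+Q\(\|g\|_{\dot H^1}\).
\]
Tracing back the origin of $Q$ in Proposition~\ref{Prop2.str-en}, it arose from the algebraic Gronwall-type relation \eqref{2.Y}, whose right-hand side is polynomial in its arguments; hence $Q$ may be taken to satisfy $Q(s)\le C(1+s^k)$ for some fixed integer $k$ depending only on $\kappa$. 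Consequently the first term can be rewritten as
\[
Q\bigl(2\,Q(\|\xi_u(0)\|_{\Cal E})\,e^{-\beta t}\bigr)\le \widetilde Q\(\|\xi_u(0)\|_{\Cal E}\)\,e^{-k\beta t}+C,
\]
which, after setting $\beta':=k\beta$ and absorbing the additive constant and the $g$-dependent terms into a single monotone function of $\|g\|_{\dot H^1}$, has exactly the structure required by \eqref{2.stri-dis}. Combining this Strichartz-part bound with the energy-part bound \eqref{2.en-dis} and retaining the smaller of the two exponential rates completes the proof.

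The only point that requires care is the extraction of a genuine exponential decay $e^{-\beta' t}$ from the nested expression $Q\bigl(Q(\|\xi_u(0)\|_{\Cal E})\,e^{-\beta t}\bigr)$: a purely monotone $Q$ would yield only boundedness. Decay therefore relies on the polynomial growth of $Q$, which is in turn a consequence of the sub-quintic assumption on $f$ in \eqref{2.f} used throughout the proof of Proposition~\ref{Prop2.str-en}. Modulo this bookkeeping, the argument reduces to the autonomy of \eqref{2.CHO} and straightforward monotone rearrangements, so no new analytical tools beyond Propositions~\ref{Prop2.en-dis} and \ref{Prop2.str-en} are invoked.
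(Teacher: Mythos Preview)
Your proposal is correct and follows exactly the route the paper indicates: the paper's own proof is the single sentence ``estimate \eqref{2.stri-dis} is an immediate corollary of the dissipative energy estimate \eqref{2.en-dis} and the control \eqref{2.str-en} of the Strichartz norm through the energy norm,'' and your time-shift argument is precisely how one makes that sentence rigorous. Your observation that extracting the exponential factor from the nested $Q(Q(\cdot)e^{-\beta t})$ requires the polynomial growth of $Q$ is a genuine detail that the paper suppresses but which is indeed justified by the sub-quintic structure of Proposition~\ref{Prop2.str-en}, so your bookkeeping is appropriate rather than superfluous.
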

Indeed, estimate \eqref{2.stri-dis} is an immediate corollary of the dissipative energy estimate \eqref{2.en-dis} and the control \eqref{2.str-en} of the Strichartz norm through the energy norm.
\par
We are now ready to state the main result of the section.
\begin{theorem}\label{Th2.main} Let the assumptions of Proposition \ref{Prop2.en-dis}. Then, for every $\xi_0\in\Cal E$, there exists a unique Strichartz solution $u$ of problem \eqref{2.CHO} and this solution satisfies the dissipative estimate \eqref{2.stri-dis}.
\end{theorem}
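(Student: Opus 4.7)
The plan is to obtain local existence by a contraction-mapping argument in the Strichartz space, derive uniqueness from the same Lipschitz estimate, and then extend globally using the a priori dissipative bound of Corollary~\ref{Cor2.stri-dis}. For $\xi_0\in\Cal E$, I would introduce the Banach space
\[
\Cal X_\tau := \{v : \xi_v \in C(0,\tau;\Cal E),\ v\in L^4(0,\tau;L^\infty)\}, \quad \|v\|_{\Cal X_\tau} := \|\xi_v\|_{C(0,\tau;\Cal E)}+\|v\|_{L^4(0,\tau;L^\infty)},
\]
and define the map $\Cal T\colon v \mapsto u$, where $u$ solves the \emph{linear} CHO equation \eqref{1.lin-CHO} with forcing $H(t):=f(v(t))+g$ and initial data $\xi_0$; this is well defined by Proposition~\ref{Prop1.en} once $f(v)\in L^1(0,\tau;\dot H^1)$. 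Corollary~\ref{Cor1.lin-Str} then gives
\[
\|\Cal T(v)\|_{\Cal X_\tau} \le C\bigl(\|\xi_0\|_{\Cal E} + \|g\|_{\dot H^1} + \|f(v)\|_{L^1(0,\tau;\dot H^1)}\bigr).
\]

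Repeating the sub-quintic interpolation used in Proposition~\ref{Prop2.str-en} (see computation~\eqref{2.bigint}) yields $\|f(v)\|_{L^1(0,\tau;\dot H^1)} \le C\tau^\gamma(1+\|v\|_{\Cal X_\tau}^4)\|v\|_{\Cal X_\tau}$, so setting $R:=2C(\|\xi_0\|_{\Cal E}+\|g\|_{\dot H^1})$ and choosing $\tau_0$ small depending only on $R$, the map $\Cal T$ stabilizes the closed ball $B_R\subset\Cal X_\tau$. For contraction, the difference $w:=\Cal T(v_1)-\Cal T(v_2)$ solves the linear CHO equation with zero data and forcing $f(v_1)-f(v_2)$; writing $f(v_1)-f(v_2)=(v_1-v_2)\int_0^1 f'(v_2+s(v_1-v_2))\,ds$ and invoking $|f''(u)|\le C(1+|u|^{3-\kappa})$, an analogous H\"older/Sobolev estimate produces
\[
\|f(v_1)-f(v_2)\|_{L^1(0,\tau;\dot H^1)} \le C\tau^\gamma P(R)\,\|v_1-v_2\|_{\Cal X_\tau}
\]
for a polynomial $P$. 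After possibly shrinking $\tau_0$, $\Cal T$ becomes a contraction on $B_R$, and its unique fixed point is the desired local Strichartz solution. Uniqueness among Strichartz solutions with common initial data follows from the same Lipschitz bound applied on short intervals, combined with a standard connectedness argument on the maximal set where two such solutions coincide.

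Since $\tau_0$ depends only on $\|\xi_0\|_{\Cal E}+\|g\|_{\dot H^1}$ and Corollary~\ref{Cor2.stri-dis} guarantees the uniform bound $\|\xi_u(t)\|_{\Cal E}\le Q(\|\xi_0\|_{\Cal E})+Q(\|g\|_{\dot H^1})$ for every Strichartz solution, the local construction can be iterated with a uniform step size, producing a global Strichartz solution on $[0,\infty)$; the dissipative estimate~\eqref{2.stri-dis} is then exactly the content of Corollary~\ref{Cor2.stri-dis}.

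The main technical obstacle is the Lipschitz estimate for $f(v_1)-f(v_2)$ in $L^1(0,\tau;\dot H^1)$. Differentiating produces two structurally distinct pieces, $(f'(v_1)-f'(v_2))\Nx v_1$ and $f'(v_2)\Nx(v_1-v_2)$, each of which has to be controlled by interpolating the Strichartz $L^\infty_x$ norm against the $\dot H^1$ energy norm, and the sub-quintic exponent $3-\kappa$ must be distributed so that a genuinely positive power $\tau^\gamma$ survives the H\"older inequality in time. Because the nonlinearity is close to the critical quintic threshold, this interpolation is delicate and is the reason the assumption $\kappa>0$ is essential.
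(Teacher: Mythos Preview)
Your proposal is correct and follows essentially the same route as the paper. The paper is terse on local existence (``straightforward and can be done using the Banach contraction theorem'') and spends its effort on uniqueness via the Lipschitz estimate for $f(u_1)-f(u_2)$ in $L^1(0,\tau;\dot H^1)$, splitting $\Nx(f(u_1)-f(u_2))$ into exactly the two pieces you identify; your write-up merely fills in the contraction-map details the paper omits and then invokes the same difference estimate and the same iteration-by-uniform-step globalization.
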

\begin{proof} Indeed, the dissipative estimate is already verified. Since the non-linearity $f$ is subcritical ($\kappa>0$), the local existence of a Srtrichartz solution is straightforward and can be done using the Banach contraction theorem. Moreover, the interval of existence depends only on the energy norm of the initial data. Since the energy norm is under the control due to \eqref{2.stri-dis}, the global existence follows by the extension of a local solution. Thus, we only need to verify the uniqueness.
\par
Let $u_1$ and $u_2$ be two Strichartz solutions of equation \eqref{2.CHO} and let $v=u_1-u_2$. Then, this function solves
\begin{equation}\label{2.dif}
\Dt^2 v+\Dt v+\Dx(\Dx v-[f(u_1)-f(u_2)])+\alpha v=0.
\end{equation}
Interpreting this equation as a linear hyperbolic CHO equation with the extrenal forces $f(u_1)-f(u_2)$ and applying estimate \eqref{1.lin-str} on a small time interval $t\in[0,\tau]$, we have
\begin{multline}\label{2.Yv}
Y_v(\tau):=\|v\|_{C(0,\tau;\Cal E)}+\|v\|_{L^4(0,\tau;L^\infty)}\le\\\le C\(\|\xi_v(0)\|_{\Cal E}+\|f(u_1)-f(u_2)\|_{L^1(0,\tau;\dot H^1)}\),
\end{multline}
where the constant $C$ is independent of $u_1$, $u_2$ and $\tau$. Using the growth restrictions \eqref{2.f} on the nonlinearity, we estimate the last term in the right-hand side of \eqref{2.Yv} as follows:
\begin{multline}\label{2.huge}
\|f(u_1)-f(u_2)\|_{L^1(0,\tau;\dot H^1)}\le C\|(1+|u_1|^{4-\kappa}+|u_2|^{4-\kappa})\Nx v\|_{L^1(0,\tau;L^2)}+\\+C\|(1+|u_1|^{3-\kappa}+|u_2|^{3-\kappa})|v|(|\Nx u_1|+|\Nx u_2|)\|_{L^1(0,\tau;L^2)}=I_1+I_2.
\end{multline}
Estimating the first term in the right-hand side  \eqref{2.huge} exactly as in \eqref{2.bigint}, we have
\begin{equation}\label{2.I1}
I_1\le C\tau^\gamma(1+\|u_1\|^4_{L^4(0,\tau;L^\infty)}+\|u_2\|^4_{L^4(0,\tau;L^\infty)})Y_v(\tau),
\end{equation}
where the constants $\gamma=\gamma(\kappa)>0$ and $C>0$ are independent of $\tau$, $u_1$ and $u_2$. The second term $I_2$ can be estimated as follows:
\begin{multline}\label{2.hhuge}
I_2\le C\(\tau^{3/4}+\|u_1\|_{L^{4-4\kappa/3}(0,\tau;L^\infty)}^{3-\kappa}+\|u_2\|_{L^{4-4\kappa/3}(0,\tau;L^\infty)}^{3-\kappa}\)\|v\|_{L^4(0,\tau;L^\infty)}\times\\
\(\|u_1\|_{L^\infty(0,\tau;\dot H^1)}+\|u_2\|_{L^\infty(0,\tau;\dot H^1)}\)\le C\tau^\gamma\(1+\|u_1\|^3_{L^4(0,\tau;L^\infty)}+\|u_2\|^3_{L^4(0,\tau;L^\infty)}\)\\\times\(\|u_1\|_{L^\infty(0,\tau;\dot H^1)}+\|u_2\|_{L^\infty(0,\tau;\dot H^1)}\)Y_v(\tau)
\end{multline}
for some positive constants $\gamma=\gamma(\kappa)$ ( the same as in \eqref{2.bigint}) and $C$ which are independent of $\tau$, $u_1$ and $u_2$. Inserting the obtained estimates into the right-hand side of \eqref{2.Yv} and using estimate \eqref{2.stri-dis} to control the norms of $u_1$ and $u_2$, we finally arrive at
\begin{equation}\label{2.YYv}
Y_v(\tau)\le C\tau^\gamma\(1+Q(\|\xi_{u_1}(0)\|_{\Cal E})+Q(\|\xi_{u_2}(0)\|_{\Cal E})\)Y_v(\tau)+C\|\xi_v(0)\|_{\Cal E}.
\end{equation}
This estimate shows that there exists $\tau_0>0$ depending only on the energy norms of the initial data for $u_1$ and $u_2$ such that
\begin{equation}\label{2.fin}
Y_v(\tau)\le C\|\xi_v(0)\|_{\Cal E},\ \ \tau\le\tau_0.
\end{equation}
Thus, the uniqueness is proved and the theorem is also proved.
\end{proof}
\begin{remark}\label{Rem2.fin} Actually, estimate \eqref{2.fin} gives a bit more than the uniqueness. Indeed, iterating this estimate and using that the energy norms of $u_1$ and $u_2$ are under the control, we have
\begin{equation}\label{2.lip}
\|\xi_{u_1}(t)-\xi_{u_2}(t)\|_{\Cal E}+\|u_1-u_2\|_{L^4(t,t+1;L^\infty)}\le Ce^{Kt}\|\xi_{u_1}(0)-\xi_{u_2}(0)\|_{\Cal E},
\end{equation}
where the constants $C$ and $K$ depend only on the energy norms for the intitial data for $u_1$ and $u_2$. Thus, we have for free the Lipschitz continuity of the Strichartz solution of \eqref{2.CHO} with respect to the initial data.
\par
Note also that, analogously to the case of bounded domains, see \cite{GSZ3}, we may define the class of so-called {\it energy} solutions which belong to the space $L^\infty(0,T;\Cal E)$ and even prove their global existence and dissipativity. However, for the uniqueness we have crucially used the extra regularity given by the Strichartz estimate. Moreover, to the best of our knowledge, the uniqueness of energy solutions is an open problem even in the case when the non-linearity $f$ is globally bounded. In particular, it is not known whether or not any energy solution is automatically a Strichartz one.
\end{remark}

\section{The global attractor}\label{s4}
As shown in the previous section, the hyperbolic CHO equation \eqref{2.CHO} is globally well-posed in the energy phase space $\Cal E$ in the class of Strichartz solutions. Thus, the solution semigroup
\begin{equation}
S(t):\Cal E\to\Cal E,\ \ S(t)\xi_0:=\xi_u(t),\ \ t\ge0,
\end{equation}
where $u(t)$ is a Strichartz solution of problem \eqref{2.CHO}, is well-defined. Moreover, due to estimate \eqref{2.stri-dis}, this semigroup is dissipative in $\Cal E$. The main aim of this section is to verify that this solution semigroup possesses a global attractor in $\Cal E$.
\par
We start with reminding some basic fact from the attractor's theory, see \cite{BV,Te} for more details.
\begin{definition} Let $S(t):\Cal E\to\Cal E$ be a semigroup.
A set $\Cal B$ is an {\it absorbing} set for this semigroup if, for any bounded set $B\subset\Cal E$ there exists $T=T(B)$ such that
\begin{equation}
S(t)B\subset\Cal B
\end{equation}
for all $t\ge T$.
\par
 A set $\Cal B$ is an {\it attracting} set for the semigroup $S(t)$ if, for any bounded set
$B\subset\Cal E$ and every neighbourhood $\Cal O(\Cal B)$ of the set $\Cal B$ there exists time $T=T(B,\Cal O)$ such that
\begin{equation}
S(t)B\subset\Cal O(\Cal B)
\end{equation}
for all $t\ge T$.
\end{definition}
\begin{definition}\label{Def3.attr} Let $S(t):\Cal E\to\Cal E$ be a semigroup. A set $\Cal A\subset\Cal E$ is a {\it global attractor} for the semigroup $S(t)$ if the following conditions are satisfied:
\par
1.\ The set $\Cal A$ is compact in $\Cal E$;
\par
2. The set $\Cal A$ is strictly invariant: $S(t)\Cal A=\Cal A$ for all $t\ge0$;
\par
3. The set $\Cal A$ is an attracting set for the semigroup $S(t)$.
\end{definition}
To state the criterion for the attractor's existence we need one more definition.
\begin{definition}\label{Def3.ass-comp} A semigroup $S(t):\Cal E\to\Cal E$ is {\it asymptotically compact} if for any bounded set $B\subset\Cal E$, any sequence of the initial data $\xi_n\in B$ and any sequence of times $t_n\ge0$ such that $t_n\to\infty$ as $n\to\infty$, the sequence
\begin{equation}
\{S(t_n)\xi_n\}_{n=1}^\infty
\end{equation}
is precompact in $\Cal E$.
\end{definition}
To verify the existence of a global attractor for the hyperbolic CHO equation, we will use the following version of the attractor's existence criterion, see \cite{BV,Te} for the proof.
\begin{proposition}\label{Prop3.attr} Let the semigroup $S(t):\Cal E\to\Cal E$ possess the following properties:
\par
1.\ The operators $S(t):\Cal E\to\Cal E$ are continuous in $\Cal E$ for every fixed $t$;
\par
2.\  The semigroup $S(t)$ possesses a bounded attracting set;
\par
3.\ The semigroup $S(t)$ is asymptotically compact.
\par
Then the semigroup $S(t)$ possesses a global attractor $\Cal A\subset \Cal E$ which is generated by all complete trajectories of the semigroup $S(t)$:
\begin{equation}
\Cal A=\Cal K\big|_{t=0},
\end{equation}
where $\Cal K\subset L^\infty(\R,\Cal E)$ consists of all bounded functions $u:\R\to\Cal E$ such that $S(h)u(t)=u(t+h)$ for all $t\in\R$ and $h\ge0$.
\end{proposition}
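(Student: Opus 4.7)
The plan is to construct $\Cal A$ as the $\omega$-limit set of the bounded attracting set $\Cal B$ provided by hypothesis~2, verify the three defining properties of Definition \ref{Def3.attr} in turn, and finally identify $\Cal A$ with the time-zero section $\Cal K|_{t=0}$.

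First I would set
$$\Cal A:=\omega(\Cal B):=\bigcap_{T\ge 0}\overline{\bigcup_{t\ge T}S(t)\Cal B}^{\,\Cal E},$$
and use the standard sequential characterization: $\xi\in\Cal A$ iff there exist $\xi_n\in\Cal B$ and $t_n\to\infty$ with $S(t_n)\xi_n\to\xi$ in $\Cal E$. Asymptotic compactness (hypothesis~3) gives non-emptiness of $\Cal A$ and, combined with the closedness built into the intersection, compactness. Boundedness is automatic from $\Cal A\subset\overline{\Cal B}$.

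Next I would verify strict invariance $S(t)\Cal A=\Cal A$. The inclusion $S(t)\Cal A\subset\Cal A$ follows from the semigroup identity $S(t)S(t_n)\xi_n=S(t+t_n)\xi_n$ together with continuity of $S(t)$ (hypothesis~1). The reverse inclusion is subtler: for $\xi=\lim_n S(t_n)\xi_n\in\Cal A$ asymptotic compactness applied to the shifted sequence $\{S(t_n-t)\xi_n\}$ produces a subsequential limit $\eta$, which lies in $\Cal A$ by definition of the $\omega$-limit set, and continuity of $S(t)$ then passes to the limit in $S(t)S(t_n-t)\xi_n=S(t_n)\xi_n$ to give $S(t)\eta=\xi$. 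For the attraction property I would argue by contradiction: a bounded $B$ escaping a neighbourhood of $\Cal A$ along times $t_n\to\infty$ would, after exploiting that $\Cal B$ is absorbing to reduce to $\xi_n\in\Cal B$, contradict asymptotic compactness and the definition of $\Cal A$.

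Finally, to identify $\Cal A$ with $\Cal K|_{t=0}$: the inclusion $\Cal K|_{t=0}\subset\Cal A$ follows by writing $u(0)=S(t)u(-t)$ for a complete bounded trajectory $u\in\Cal K$, so that $u(0)$ is a forward image of data in a fixed bounded set at arbitrarily large times, hence in $\Cal A$ by attraction. Conversely, given $\xi\in\Cal A$, repeated use of $\Cal A=S(k)\Cal A$ for $k\in\{1,2,\dots\}$ yields pre-images $\xi_{-k}\in\Cal A$ with $S(k)\xi_{-k}=\xi$; concatenating the forward orbits $t\mapsto S(t+k)\xi_{-k}$ on $[-k,\infty)$ and extracting a diagonal subsequential limit (justified by compactness of $\Cal A$) produces a bounded complete trajectory through $\xi$ contained in $\Cal A$. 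The main obstacle throughout is the backward invariance $\Cal A\subset S(t)\Cal A$ and the parallel construction of complete trajectories on all of $\R$: both rest essentially on asymptotic compactness, and neither can be obtained from dissipativity plus continuity alone.
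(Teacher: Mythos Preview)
The paper does not prove this proposition at all: immediately after the statement it writes ``see \cite{BV,Te} for the proof'' and moves on. Your sketch is exactly the classical $\omega$-limit argument found in those references (Babin--Vishik, Temam), so there is nothing substantive to compare.

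One point to tighten: in your attraction-by-contradiction step you write ``exploiting that $\Cal B$ is absorbing to reduce to $\xi_n\in\Cal B$'', but hypothesis~2 only furnishes a bounded \emph{attracting} set, not an absorbing one. You therefore cannot place the $\xi_n$ inside $\Cal B$ directly. The standard repair is to apply asymptotic compactness to the original sequence $S(t_n)\xi_n$ with $\xi_n\in B$, extract a limit $\zeta$, and then argue that $\zeta\in\omega(\Cal B)$ by the same backward-shift device you already use for invariance: for each fixed $s>0$ the shifted sequence $S(t_n-s)\xi_n$ is (by attraction of $\Cal B$) eventually in every neighbourhood of $\Cal B$, and a subsequential limit $\eta^s\in\overline{\Cal B}$ satisfies $S(s)\eta^s=\zeta$ by continuity; letting $s\to\infty$ along integers and using the sequential description of $\omega(\overline{\Cal B})=\omega(\Cal B)$ places $\zeta$ in $\Cal A$. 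With that adjustment your outline is correct.
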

We are now ready to state the main result of this section.
\begin{theorem}\label{Th3.attr} Let the assumptions of Theorem \ref{Th2.main} hold. Then, the solution semigroup $S(t)$ associated with the hyperbolic CHO equation \eqref{2.CHO} possesses a global attractor $\Cal A$ in the energy phase space $\Cal E$ which is generated by all complete bounded Strichartz solutions of \eqref{2.CHO}
\begin{equation}\label{3.a-srtr}
\Cal A=\Cal K\big|_{t=0},
\end{equation}
where $\Cal K\subset C_b(\R,\Cal E)$ is a set of all Strichartz solutions of \eqref{2.CHO} which are defined for all $t\in\R$ and bounded.
\end{theorem}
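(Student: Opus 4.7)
The strategy is to verify the three hypotheses of Proposition \ref{Prop3.attr}: continuity of each $S(t)$, existence of a bounded absorbing set, and asymptotic compactness. The first is immediate from the Lipschitz estimate \eqref{2.lip} of Remark \ref{Rem2.fin}, and the second follows at once from the dissipative estimate \eqref{2.stri-dis}, which shows that any ball in $\Cal E$ of radius strictly larger than $Q(\|g\|_{\dot H^1})$ is absorbing for all bounded sets. The representation \eqref{3.a-srtr} of the attractor as the time-zero slice of bounded complete Strichartz trajectories is then automatic from the abstract criterion.

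The core step is asymptotic compactness, which I would establish by the energy method in the spirit of Ball and Moise-Rosa-Wang. Given a bounded sequence $\xi_n\in\Cal E$ and times $t_n\to\infty$, form the shifted Strichartz solutions $u_n(t):=u(t+t_n;\xi_n)$, defined on $[-t_n,\infty)$ and, by \eqref{2.stri-dis}, uniformly bounded in $C_{loc}(\R;\Cal E)\cap L^4_{loc}(\R;L^\infty)$ once $n$ is large. A diagonal extraction gives $u_n\to u_\infty$ weak-$*$ in $L^\infty_{loc}(\R;\dot H^1\cap\dot H^{-1})$, $\Dt u_n\to\Dt u_\infty$ weak-$*$ in $L^\infty_{loc}(\R;\dot H^{-1})$, and strongly in $L^2_{loc}(\R;L^2_{loc}(\R^3))$ by the Lions-Aubin lemma applied on each bounded spatial ball. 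Together with the uniform $L^4_{loc}(L^\infty)$ Strichartz bound and the sub-quintic growth \eqref{2.f}, this allows to pass to the limit in $f(u_n)$ and identify $u_\infty$ as a complete bounded Strichartz solution, so $\xi_{u_\infty}(0)\in\Cal K\big|_{t=0}$.

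It remains to upgrade the weak convergence $\xi_{u_n}(0)\rightharpoonup\xi_{u_\infty}(0)$ in the Hilbert space $\Cal E$ to strong convergence; since $\Cal E$ is Hilbert, it suffices to prove convergence of norms. For this I would employ the monotone Lyapunov functional
\begin{equation*}
\Phi_u(t):=\tfrac12\|\xi_u(t)\|^2_{\Cal E}+(F(u(t)),1)-(g,u(t))
\end{equation*}
provided by \eqref{2.energy}. Applying the identity on $[-T,0]$,
\begin{equation*}
\Phi_{u_n}(0)=\Phi_{u_n}(-T)-\int_{-T}^{0}\|\Dt u_n(s)\|^2_{\dot H^{-1}}\,ds,
\end{equation*}
taking $\limsup_n$, and using weak lower semicontinuity of the dissipative integral together with the convergence $\Phi_{u_n}(-T)\to\Phi_{u_\infty}(-T)$ at suitable times $-T$, one obtains $\limsup_n\Phi_{u_n}(0)\le\Phi_{u_\infty}(0)$. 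Combined with the trivial lower semicontinuity bound $\liminf_n\Phi_{u_n}(0)\ge\Phi_{u_\infty}(0)$, this yields convergence of $\Phi$, and hence of $\|\xi_{u_n}(0)\|_{\Cal E}$ to $\|\xi_{u_\infty}(0)\|_{\Cal E}$.

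The main obstacle I anticipate is the passage to the limit in the nonlinear piece $(F(u_n),1)$ on the whole space $\R^3$, since $\dot H^1\hookrightarrow L^6$ is not compact and mass can escape to spatial infinity. This is handled by a tail estimate: the Oono term $\alpha u$ provides control of the $\dot H^{-1}$ norm which, interpolated with the $\dot H^1$ and the $L^\infty$ Strichartz bounds, yields uniform smallness of $\int_{|x|\ge R}F(u_n)\,dx$ as $R\to\infty$, while on bounded balls the Rellich-Kondrachov compactness combined with the subquintic growth \eqref{2.f} gives strong convergence of $F(u_n)$. Once this step is established, all three hypotheses of Proposition \ref{Prop3.attr} are satisfied and the theorem follows.
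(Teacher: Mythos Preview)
Your overall strategy --- verify the three hypotheses of Proposition~\ref{Prop3.attr}, with asymptotic compactness via the Ball/Moise--Rosa--Wang energy method --- is exactly the paper's. The weak-limit identification (your Step~1) is also essentially the same. The gap is in the norm-convergence step.

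Using the plain Lyapunov functional $\Phi_u=\tfrac12\|\xi_u\|^2_{\Cal E}+(F(u),1)-(g,u)$ with the identity on $[-T,0]$ leads to a circularity: from
\[
\Phi_{u_n}(0)=\Phi_{u_n}(-T)-\int_{-T}^{0}\|\Dt u_n\|^2_{\dot H^{-1}}\,ds
\]
you can only pass to $\limsup_n\Phi_{u_n}(0)\le \limsup_n\Phi_{u_n}(-T)-\int_{-T}^0\|\Dt u_\infty\|^2$, and weak lower semicontinuity gives $\limsup_n\Phi_{u_n}(-T)\ge\Phi_{u_\infty}(-T)$, not $\le$. The claimed ``convergence $\Phi_{u_n}(-T)\to\Phi_{u_\infty}(-T)$ at suitable times $-T$'' is precisely the strong convergence you are trying to prove, just at a different time. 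The paper avoids this by working with the \emph{exponentially weighted} modified energy obtained from multiplying the equation by $\Dx^{-1}(\Dt u+\delta u)$ (identity \eqref{3.big}--\eqref{3.bigid}): the contribution at the initial time $-t_n$ comes with a factor $e^{-\beta t_n}\to0$, so it disappears, and for all remaining terms one needs only \emph{one-sided} inequalities (weak lower semicontinuity for the quadratic pieces, Fatou's lemma for the nonnegative pieces $(F(u_n),1)$ and $\delta(f(u_n),u_n)-\beta(F(u_n),1)+\beta K\|u_n\|_{L^2}^2$, and genuine weak convergence only for the linear term $(g,\Dt u_n+\delta u_n)$). Comparing with the same identity written for the limit $u_\infty\in\Cal K$ then yields $\limsup_n\|\xi_{u_n}(0)\|_{\Cal E}\le\|\xi_{u_\infty}(0)\|_{\Cal E}$.

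Your proposed fix via a spatial tail estimate for $\int_{|x|\ge R}F(u_n)\,dx$ is also problematic: uniform bounds in $\dot H^{-1}\cap\dot H^1$ and $L^4_tL^\infty_x$ do not by themselves preclude mass escaping to infinity, and the paper explicitly notes (Remark~\ref{Rem4.bad}) that the hyperbolic CHO equation lacks a usable weighted/tail energy theory at this regularity. The point is that with the exponentially weighted identity you never need strong convergence of $(F(u_n),1)$ --- Fatou (using $F\ge0$) suffices --- so the tail issue simply does not arise.
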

\begin{proof} We need to check the assumptions of Proposition \ref{Prop3.attr}. The continuity of the operators $S(t)$ in $\Cal E$ for every fixed $t$ follows from estimate \eqref{2.lip}. The existence of a bounded attracting (and even absorbing) set for $S(t)$ is guaranteed by by the dissipative estimate \eqref{2.stri-dis}. Thus, we only need to check the asymptotic compactness.
\par
To verify the desired asymptotic compactness, we will use the so-called energy method, see \cite{Ball,MRW}. Indeed, let $\xi_n\in\Cal E$ be a bounded sequence, $t_n\to\infty$ be a sequence of times tending to infinity. We need to prove that $S(t_n)\xi_n$ is precompact in $\Cal E$. To this end, we define a sequence $u_n(t)$ of Strichartz solutions of the following problems:
\begin{equation}\label{3.CHOn}
\Dt^2 u_n+\Dt u_n+\Dx(\Dx u_n-f(u_n)+g)+\alpha u_n=0,\ \ \xi_{u_n}\big|_{t=-t_n}=\xi_n,\ \ t\ge-t_n.
\end{equation}
Thus, in order to prove the asymptotic compactness, we need to prove that the sequence $\{\xi_{u_n}(0)\}$ is precompact in $\Cal E$. We will do this in two steps. At step one we prove that up to a subsequence $\xi_{u_n}(0)$ converges {\it weakly} to $\xi_{u}(0)$ for some complete bounded solution $u\in\Cal K$ and at step 2 we show that the {\it energy} $\|\xi_{u_n}(0)\|_{\Cal E}$ converges to the energy $\|\xi_u(0)\|_{\Cal E}$ of the limit solution $u$ and this will give the desired strong convergence.
\par
{\it Step 1.} Since the sequence $\xi_n$ is bounded in $\Cal E$, the dissipative energy estimate \eqref{2.stri-dis} gives the uniform boundedness of the corresponding solutions $u_n$:
\begin{equation}\label{3.bounded}
\|\xi_{u_n}\|_{L^\infty(\R, \Cal E)}\le C,\ \ \|u_n\|_{L^4(T,T+1,L^\infty)}\le C,\ \ T\in\R,
\end{equation}
where $C$ is independent of $n$ (to simplify the notations, we assume that $u_n$ and $\Dt u_n$ are extended by zero for $t\le -t_n$). Thus, without loss of generality, we may assume that
\begin{equation}\label{3.weak}
\xi_{u_n}\rightharpoondown \xi_u \text{ weakly star in } \ L^\infty_{loc}(\R,\Cal E) \text{ and } u_n\rightharpoondown u \text{ weakly star in } L^4_{loc}(\R,L^\infty)
\end{equation}
and the limit function $u$ satisfies estimates \eqref{3.bounded} as well, see \cite{RR}. In order to verify that $u\in\Cal K$, we need to pass to the limit $n\to\infty$ in equations \eqref{3.CHOn}. As usual, the passage to the limit in the linear terms is immediate and only the non-linear term $f(u_n)$ may cause some problems. Thus, we only need to verify that, for every test function $\phi\in C_0^\infty(\R,\dot H^1\cap \dot H^{-1})$,
\begin{equation}\label{3.flim}
\int_\R(f(u_n(t)),\phi(t))\,dt\to \int_\R(f(u(t)),\phi(t))\,dt.
\end{equation}
Moreover, since $C_0^\infty(\R^3)$ is dense in $\dot H^1\cap\dot H^{-1}$, it is sufficient to verify \eqref{3.flim} for the test functions $\phi\in C_0^\infty(\R_t\times\R^3_x)$ only. From the uniform estimate \eqref{3.bounded}, we conclude that $f(u_n)$
 is bounded in $L^{1+\eb}(T,T+1;L^{1+\eb}(B^R_0))$ for some $\eb>0$ and every  $R>0$ and $T\in\R$ (here and below, we denote by $B^R_0$ the ball of radius $R$ centered at zero in $\R^3$). Thus, without loss of generality, we may assume that
 \begin{equation}
 f(u_n)\rightharpoondown \xi \ \text{ weakly in }\ L^{1+\eb}_{loc}(\R_t\times\R^3_x)
 \end{equation}
and, to verify the convergence \eqref{3.flim}, we only need to check that $\xi=f(u)$. In turn, to check the last identity, we only need to verify that
\begin{equation}\label{3.ae}
u_n(t,x)\to u(t,x)\ \ \text{ almost everywhere in $(t,x)\in\R^4$,}
\end{equation}
see e.g., \cite{BV,Te}. To verify \eqref{3.ae}, we note that, due to the obvious embeddings
\begin{equation}
   \dot H^1(\R^3)\cap\dot H^{-1}(\R^3)\subset H^1(\R^3),\ \ \dot H^{-1}(\R^3)\subset H^{-1}(\R^3)
\end{equation}
and the uniform boundedness \eqref{3.bounded}, we know that
\begin{equation}\label{3.bound1}
\|u_n\|_{L^\infty(\R, H^1(\R^3))}+\|\Dt u_n\|_{L^\infty(\R,H^{-1}(\R^3))}\le C
\end{equation}
uniformly with respect to $n\to\infty$. Let $\psi_R=\psi_R(x)\in C_0^\infty(\R^3)$ be the cut off function such that $\xi_R(x)=1$ for $x\in B^R_0$ and $\xi_R(x)=0$ for $x\notin B^{2R}_0$. Then \eqref{3.bound1} implies that
\begin{equation}
\|\xi_R u_n\|_{L^\infty(\R,H^1(B^{2R}_0))}+\|\Dt(\xi_R u_n)\|_{L^\infty(\R,H^{-1}(B^{2R}_0))}\le C
\end{equation}
for some $C$ which is independent of $n$. Thus, due to the compactness theorem, we may assume without loss of generality that
\begin{equation}\label{3.un-strong}
u_n\to u \ \ \text{ strongly in }\  C((T,T+1)\times L^2(B^R_0))
\end{equation}
for all $T\in\R$ and $R\in\R_+$. Thus, passing to the subsequence once more if necessary, we see that \eqref{3.ae} indeed holds. Consequently,
the convergence \eqref{3.flim} also takes place and we have proved that the limit function $u$ is a Strichartz solution of equation \eqref{2.CHO} and that $u\in\Cal K$.
\par
To complete Step 1, we need to verify that
\begin{equation}\label{3.weak1}
\xi_{u_n}(0)\rightharpoondown \xi_u(0) \ \ \text{ in } \Cal E.
\end{equation}
Actually, the weak convergence $u_n(0)\rightharpoondown u(0)$ is straightforward due to the proved strong convergence \eqref{3.un-strong} and the facts that $u_n(0)$ are uniformly bounded in $\dot H^1\cap \dot H^{-1}$ and that $C_0^\infty$ is dense in $\dot H^1\cap \dot H^{-1}$. Thus, we only need to check that
\begin{equation}
(\Dt u_n(0),\psi)\to (\Dt u(0),\psi) \ \ \text{ for every } \ \psi\in\dot H^1(\R^3).
\end{equation}
To verify this, it is enough to check the convergence for $\psi_N:=P_N\psi$ only (where $P_N$ is the projector introduced in the proof of Corollary \ref{Cor1.schro-cont}). To this end, we introduce the function $\Psi_n(t):=(\Dt u_n(t),\psi_N)$. Then, since $\psi_N\in\dot H^s$ for every $s\in\R$, we may test equation \eqref{3.CHOn} by $\psi_N$ and get
\begin{multline}
\frac d{dt}\Psi_n=-(\Dt u_n,\psi_N)-(u_n,\Dx^2\psi_N)+(f(u_n),\Dx\psi_N)-\\
(g,\Dx\psi_N)-\alpha(u_n,\psi_N).
\end{multline}
Therefore, due to the uniform bounds \eqref{3.bounded} for $u_n$ and the growth restriction \eqref{2.f} for the nonlinearity $f$,
\begin{equation}
\|\Psi_n\|_{L^\infty(\R)}+\|\frac d{dt}\Psi_n\|_{L^\infty(\R)}\le C,
\end{equation}
where the constant $C$ depends on $N$ and $\psi$, but is independent of $n$. Therefore, without loss of generality, we may assume that $\Psi_n\to\Psi:=(\Dt u,\psi_N)$ {\it strongly} in the space $C_{loc}(\R)$ and, in particular, that $\Psi_n(0)\to\Psi(0)$. Thus, the weak convergence \eqref{3.weak1} is verified and Step 1 is completed.
\par
{\it Step 2.} We now verify that $\|\xi_{u_n}(0)\|_{\Cal E}\to\|\xi_u(0)\|_{\Cal E}$. To this end, we  will use the following analogue of \eqref{2.big} for the solutions \eqref{3.CHOn}:
\begin{multline}\label{3.big}
\frac d{dt}\(\frac12\|\xi_{u_n}\|^2_{\Cal E}+\delta(\Dt u_n,-\Dx^{-1}u_n)+\frac\delta2\|u_n\|^2_{\dot H^{-1}}+(F(u_n),1)\)+\\
\beta\(\frac12\|\xi_{u_n}\|^2_{\Cal E}+\delta(\Dt u_n,-\Dx^{-1}u_n)+\frac\delta2\|u_n\|^2_{\dot H^{-1}}+(F(u_n),1)\)+\\+
(1-\delta-\frac\beta2)\|\Dt u_n\|^2_{\dot H^{-1}}+(\delta-\frac\beta2)\|u_n\|^2_{\dot H^1}+(\alpha\delta-\frac{\alpha\beta}{2}-\frac{\beta\delta}{2})\|u_n\|^2_{\dot H^{-1}}+\\+[\delta(f(u_n),u_n)-\beta (F(u_n),1)]+\delta\beta(\Dt u_n,\Dx^{-1}u_n)=-(g,\Dt u_n+\delta u_n).
\end{multline}
Introducing the functionals
\begin{equation}
\tilde {\Cal E}_{u_n}(t):=\frac14\|\xi_{u_n}(t)\|^2_{\Cal E}+\delta(\Dt u_n(t),-\Dx^{-1}u_n(t))+\frac\delta2\|u_n(t)\|^2_{\dot H^{-1}}+(F(u_n(t)),1)
\end{equation}
and
\begin{multline}
\Cal H_{u_n}(t):=(1-\delta-\frac\beta2)\|\Dt u_n(t)\|^2_{\dot H^{-1}}+(\delta-\frac\beta2)\|u_n(t)\|^2_{\dot H^1}+\\
(\alpha\delta-\frac{\alpha\beta}{2}-\frac{\beta\delta}{2})\|u_n(t)\|^2_{\dot H^{-1}}+\\+[\delta(f(u_n(t)),u_n(t))-\beta (F(u_n(t)),1)]+\delta\beta(\Dt u_n(t),\Dx^{-1}u_n(t))
\end{multline}
we rewrite the identity \eqref{3.big} in the following  form:
\begin{multline}\label{3.bigid}
\frac14\|\xi_{u_n}(0)\|^2_{\Cal E}+\tilde{\Cal E}_{u_n}(0)=\(\frac14\|\xi_{u_n}(-t_n)\|_{\Cal E}^2+\tilde{\Cal E}_{u_n}(-t_n)\)e^{-\beta t_n}-\\
-\int_{-t_n}^0e^{\beta s}\Cal H_{u_n}(s)\,ds-\int_{-t_n}^0e^{\beta s}(g,\Dt u_n(s)+\delta u_n(s))\,ds.
\end{multline}
We want to pass to the limit $n\to\infty$ in this identity. To this end, we first fix positive $\delta$ to be small enough that
\begin{equation}
\frac14\|\xi_{v}(t)\|^2_{\Cal E}+\delta(\Dt v,\Dx^{-1}v)+\frac\delta2\|v\|^2_{\dot H^{-1}}\ge0
\end{equation}
for all $v\in\Cal E$. Then, the weak convergence \eqref{3.weak1} implies that
\begin{multline}
\frac14\|\xi_{u}(0)\|^2_{\Cal E}+\delta(\Dt u(0),-\Dx^{-1}u(0))+\frac\delta2\|u(0)\|^2_{\dot H^{-1}}\le\\
\le\liminf_{n\to\infty}\( \frac14\|\xi_{u_n}(0)\|^2_{\Cal E}+\delta(\Dt u_n(0),-\Dx^{-1}u_n(0))+\frac\delta2\|u_n(0)\|^2_{\dot H^{-1}}\).
\end{multline}
Moreover, the convergence \eqref{3.un-strong} implies that $u_n(0,x)\to u(0,x)$ almost everywhere in $\R^3$. Then, the Fatou lemma together with the fact that $F(u)\ge0$ give
\begin{equation}
(F(u(0)),1)\le\liminf_{n\to\infty}(F(u_n(0)),1)
\end{equation}
and, therefore,
\begin{equation}\label{3.lime}
\tilde{\Cal E}_u(0)\le\liminf_{n\to\infty}\tilde{\Cal E}_{u_n}(0).
\end{equation}
The first term in the right-hand side of \eqref{3.bigid} tends to zero due to the facts that $t_n\to\infty$ and the energy norms of the initial data for $u_n$ are uniformly bounded. Moreover, due to the facts that the solutions $u_n$ are uniformly bounded in $L^\infty(\R,\Cal E)$ and converge weakly star in $L^\infty_{loc}(\R,\Cal E)$, we have the convergence in the third term:
\begin{equation}\label{3.limg}
\int_{-t_n}^0e^{\beta s}(g,\Dt u_n(s)+\delta u_n(s))\,ds\to\int_{-\infty}^0e^{\beta s}(g,\Dt u(s)+\beta u(s))\,ds.
\end{equation}
To pass to the limit in the term containing the functional $\Cal H$, we fix $\beta>0$ to be small enough that
\begin{equation}
\delta f(u)u-\beta F(u)+\beta Ku^2\ge0
\end{equation}
(which is possible to do due to the 3rd assumption of \eqref{2.f}) and that
\begin{multline}
(1-\delta-\frac\beta2)\|\Dt v\|^2_{\dot H^{-1}}+(\delta-\frac\beta2)\|v\|^2_{\dot H^1}+\\
+(\alpha\delta-\frac{\alpha\beta}{2}-\frac{\beta\delta}{2})\|v\|^2_{\dot H^{-1}}-\beta K\|v\|_{L^2}+\delta\beta(\Dt v(t),\Dx^{-1}v(t))\ge0
\end{multline}
for all $v\in\Cal E$. Then, using the weak star convergence of $u_n$ in $L^\infty_{loc}(\R,\Cal E)$ together with the convergence almost everywhere and the Fatou lemma, we conclude that
\begin{equation}\label{3.limh}
\int_{-\infty}^0e^{\beta s}\Cal H_u(s)\,ds\le\liminf_{n\to\infty}\int_{-t_n}^0e^{\beta s}\Cal H_{u_n}(s)\,ds.
\end{equation}
Thus, passing to the limit $n\to\infty$ in the identity \eqref{3.bigid} and using \eqref{3.lime},\eqref{3.limg} and \eqref{3.limh}, we have
\begin{multline}\label{3.bigid1}
\frac14\limsup_{n\to\infty}\|\xi_{u_n}(0)\|^2_{\Cal E}+\tilde{\Cal E}_{u}(0)\le \\\le -\int_{-\infty}^0e^{\beta s}\Cal H_{u}(s)\,ds-\int_{-\infty}^0e^{\beta s}(g,\Dt u(s)+\delta u(s))\,ds.
\end{multline}
On the other hand, writing the analogue of the identity \eqref{3.bigid} for the limit solution $u\in\Cal K$, we end up with
\begin{equation}\label{3.bigid2}
\frac14\|\xi_{u}(0)\|^2_{\Cal E}+\tilde{\Cal E}_{u}(0)=-\int_{-\infty}^0e^{\beta s}\Cal H_{u}(s)\,ds-\int_{-\infty}^0e^{\beta s}(g,\Dt u(s)+\delta u(s))\,ds.
\end{equation}
Thus,
\begin{equation}
\limsup_{n\to\infty}\|\xi_{u_n}(0)\|_{\Cal E}\le\|\xi_u(0)\|_{\Cal E}\le\liminf_{n\to\infty}\|\xi_{u_n}(0)\|_{\Cal E},
\end{equation}
where the inequality in the right-hand side follows from the weak convergence \eqref{3.weak1}. This is possible only if we have the convergence of the norms
\begin{equation}
\|\xi_{u_n}(0)\|_{\Cal E}\to\|\xi_{u}(0)\|_{\Cal E},\ n\to\infty
\end{equation}
which together with the weak convergence \eqref{3.weak1} implies the strong convergence
\begin{equation}
\xi_{u_n}(0)\to\xi_u(0) \ \text{ in the energy space }\ \ \Cal E
\end{equation}
and finishes the proof of the theorem.
\end{proof}
\begin{remark}\label{Rem4.bad} Remind that the asymptotic compactness of the solution semigroups associated with dissipative PDEs in unbounded domains are usually proved using the so-called weighted energy estimates or/and the so-called tail estimates, see \cite{Ab2,BV1,MZ,Z4} and references therein.
However, in contrast to many examples considered in \cite{MZ}, the hyperbolic CHO equation {\it does not} possess a weighted energy theory at least in the form used in \cite{MZ} and the weighted estimates require more regularity of the solutions. As we will see in the next section, the extra regularity of solutions require in turn the asymptotic compactness (at least under the approach used there). Thus, we do not see how to apply the weighted energy estimates for verifying the asymptotic compactness of the hyperbolic CHO equation and are forced  to use the energy method instead.
\par
Note also that despite the fact that the energy method is completely standard nowadays, the application of it to the hyperbolic CHO equation is a bit delicate due the presence of homogeneous Sobolev spaces for which we do not have the embedding $\dot H^{s_1}\subset\dot H^{s_2}$ for $s_1>s_2$. By this reason, we give the detailed exposition of the method in this section.
\end{remark}

\section{Smoothness of the global attractor}\label{s5}
The aim of this section is to verify that under the global attractor $\Cal A$ of the hyperbolic CHO equation is actually more smooth and, in particular, is a bounded set in the second energy space
\begin{equation}\label{4.space}
\Cal E_2:=[\dot H^{-1}\cap \dot H^3]\times[\dot H^{-1}\cap \dot H^1].
\end{equation}
To verify this fact, we will essentially use the asymptotic compactness proved in the previous section and its analogue for the space of trajectories.
\begin{proposition}\label{Prop4.kcomp} Let the assumptions of Theorem \ref{Th3.attr} hold and let $\Cal K$ be the set of all complete bounded Strichartz solutions of equation \eqref{2.CHO}. Then the set $\Cal K$ is compact in the space $C_{loc}(\R,\Cal E)\cap L^4_{loc}(\R,C(\R^3))$:
\begin{equation}\label{4.comp}
\Cal K\subset\subset C_{loc}(\R,\Cal E)\cap L^4_{loc}(\R,C(\R^3)).
\end{equation}
\end{proposition}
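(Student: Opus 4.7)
The plan is to use the compactness of the attractor $\Cal A$ at each backward time together with the Lipschitz continuous dependence \eqref{2.lip} to promote pointwise convergence into convergence in $C_{loc}(\R,\Cal E)\cap L^4_{loc}(\R,C(\R^3))$. Fix a sequence $\{u_n\}\subset\Cal K$. By shift-invariance of $\Cal K$, for every $k\in\mathbb N$ the translate $u_n(\cdot-k)$ is again in $\Cal K$, hence $\xi_{u_n}(-k)\in\Cal A$. Since $\Cal A$ is compact in $\Cal E$ by Theorem \ref{Th3.attr}, a diagonal extraction yields a subsequence (not relabelled) and elements $\zeta_k\in\Cal A$ such that $\xi_{u_n}(-k)\to\zeta_k$ strongly in $\Cal E$ for every $k\in\mathbb N$.

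Next, for each fixed $k$, apply the Lipschitz estimate \eqref{2.lip} on time intervals of the form $[-k,T]$ to the pair $u_n$, $u_m$. Since $\xi_{u_n}(-k)$ is Cauchy in $\Cal E$, this estimate gives that $\xi_{u_n}$ is Cauchy in $C([-k,T],\Cal E)$ and $u_n$ is Cauchy in $L^4(-k,T;L^\infty(\R^3))$ for every $T>-k$. Taking $k\to\infty$, we obtain a limit function $u$ such that, on every compact interval $[a,b]\subset\R$, $\xi_{u_n}\to\xi_u$ in $C([a,b],\Cal E)$ and $u_n\to u$ in $L^4(a,b;L^\infty(\R^3))$. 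In particular $u$ is a Strichartz solution of \eqref{2.CHO}: passing to the limit in the equation is straightforward because $u_n\to u$ strongly in $\Cal E$ and in $L^4_{loc}(L^\infty)$, which controls the nonlinear term via the subquintic estimate \eqref{2.bigint}. Boundedness of $u$ in $L^\infty(\R,\Cal E)$ follows from the uniform boundedness of $\{u_n\}$, so $u\in\Cal K$.

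Finally, we upgrade $L^4_{loc}(\R,L^\infty)$ convergence to $L^4_{loc}(\R,C(\R^3))$ convergence. Since each Strichartz solution actually lies in $L^4_{loc}(\R,C(\R^3))$ by Corollary \ref{Cor1.schro-cont} (applied to the plate/linear hyperbolic CHO representation of Section \ref{s2}), and since the $C(\R^3)$ norm coincides with the $L^\infty(\R^3)$ norm on the space of bounded continuous functions, the $L^4(L^\infty)$ convergence is in fact convergence in $L^4(C(\R^3))$. This yields the desired compactness \eqref{4.comp}.

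The only subtle point in this argument is the backward-in-time step: the semigroup $S(t)$ is not invertible on $\Cal E$, so we cannot simply propagate strong convergence from $t=0$ into the past. This is precisely where we need the standing hypothesis that each $u_n$ is a \emph{complete} bounded Strichartz trajectory together with the strict invariance $S(t)\Cal A=\Cal A$, which provides the required compactness at each negative time and lets us invoke \eqref{2.lip} on arbitrarily long forward intervals starting from $-k$.
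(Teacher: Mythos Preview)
Your proof is correct and follows exactly the approach the paper indicates: the paper's own proof is a one-line remark that the proposition is ``an immediate corollary of the Lipschitz continuity of the solution semigroup $S(t)$, see \eqref{2.lip}, and the compactness of the global attractor $\Cal A$ in the phase space $\Cal E$,'' and you have simply spelled out this immediate corollary in full (diagonal extraction on $\Cal A$ at times $-k$, forward propagation via \eqref{2.lip}, and the $L^\infty\to C(\R^3)$ upgrade via Corollary \ref{Cor1.schro-cont}).
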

Indeed, the assertion of the proposition is an immediate corollary of the Lipschitz continuity of the solutions semigroup $S(t)$, see \eqref{2.lip} and the compactness of the global attractor $\Cal A$ in the phase space $\Cal E$.
\par
The following simple corollary of the proved compactness is however crucial for our method.
\begin{corollary}\label{Cor4.app1} Let the assumptions of Proposition \ref{Prop4.kcomp} hold. Then, for every $u\in\Cal K$ and every $\eb>0$, there exist functions $A_\eb(t)$ and $B_\eb(t)$ such that $u(t)=A_\eb(t)+B_\eb(t)$ and
\begin{equation}
\|\xi_{A_\eb}\|_{L^\infty(\R,\Cal E)}+\|A_\eb\|_{L^4(t,t+1;L^\infty)}\le \eb,\ \ t\in\R
\end{equation}
and $B_\eb\in C^1_b(\R, H^2(\R^3))$ with the estimate
\begin{equation}
\|B_\eb\|_{C^1_b(\R,\Cal E\cap H^2(\R^3))}\le C_\eb.
\end{equation}
Moreover, the constant $C_\eb$ is independent of $u\in\Cal K$.
\end{corollary}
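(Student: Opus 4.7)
The plan is to obtain the decomposition by a single spatial Fourier cutoff applied to the whole trajectory. Fix a radial $\phi\in C^\infty_c(\R^3)$ with $\phi\equiv 1$ on a neighbourhood of the origin, and for $N\ge1$ let $P_N$ denote the smooth low-pass multiplier $\widehat{P_Nv}(\xi):=\phi(\xi/N)\hat v(\xi)$. Given $u\in\Cal K$ and $\eb>0$, I would set
\begin{equation*}
B_\eb(t):=P_N u(t),\qquad A_\eb(t):=(I-P_N)u(t),
\end{equation*}
and fix the threshold $N=N(\eb)$ at the very end.

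The smoothness and uniform $H^2$-bound on $B_\eb$ are immediate from Fourier support localisation. Since $B_\eb(t)$ and $\Dt B_\eb(t)$ have Fourier support in $\{|\xi|\le CN\}$, the pointwise bound $|\hat v(\xi)|^2\le (CN)^2|\xi|^{-2}|\hat v(\xi)|^2$ on that support yields $\|P_Nv\|_{H^s}\le C_s N^{s+1}\|v\|_{\dot H^{-1}}$ for every $s\ge0$. Applied to $u(t)\in\dot H^1\cap\dot H^{-1}\subset L^2$ and to $\Dt u(t)\in\dot H^{-1}$, this gives $\|B_\eb\|_{C^1_b(\R,H^2)}\le C(N) R$, where $R:=\sup_{u\in\Cal K}\|u\|_{C^1_b(\R,\Cal E)}<\infty$ by Theorem~\ref{Th3.attr}. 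Combined with the trivial contraction estimate $\|\xi_{B_\eb}\|_{L^\infty(\R,\Cal E)}\le R$ (since $|\phi(\xi/N)|\le 1$, the operator $P_N$ is a contraction on every $L^2$-based Sobolev norm), this yields $\|B_\eb\|_{C^1_b(\R,\Cal E\cap H^2)}\le C_\eb:=C(N) R$, independently of $u\in\Cal K$.

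Both smallness estimates on $A_\eb$ reduce to the following soft principle: a uniformly bounded family of linear operators on a Banach space $X$ which converges strongly to zero on $X$ also converges uniformly to zero on every compact subset of $X$. For the energy norm, the strict invariance $S(t)\Cal A=\Cal A$ combined with the translation invariance of $\Cal K$ gives $\{\xi_u(t):u\in\Cal K,\,t\in\R\}\subset\Cal A$, which is compact in $\Cal E$ by Theorem~\ref{Th3.attr}. The family $\{I-P_N\}$ is uniformly bounded on $\Cal E$ and converges strongly to $0$ on each factor $\dot H^1\cap\dot H^{-1}$ and $\dot H^{-1}$ (by dominated convergence in the Fourier variable). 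Hence $\eta_1(N):=\sup_{\xi\in\Cal A}\|(I-P_N)\xi\|_{\Cal E}\to0$ as $N\to\infty$, and in particular $\|\xi_{A_\eb}\|_{L^\infty(\R,\Cal E)}\le\eta_1(N)$.

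The Strichartz estimate on $A_\eb$ is the main technical point, and relies crucially on Remark~\ref{Rem1-C0}. By Proposition~\ref{Prop4.kcomp} together with that remark, $\Cal K$ is in fact compact in $L^4(0,1;C_0(\R^3))$, not merely in $L^4(0,1;L^\infty)$. The multiplier $P_N$ is convolution with the kernel $N^3(\mathcal F^{-1}\phi)(N\,\cdot\,)$, whose $L^1$ norm is independent of $N$, so $\{I-P_N\}$ is uniformly bounded on $L^4(0,1;L^\infty)$. For any $w\in L^4(0,1;C_0(\R^3))$, uniform continuity of $w(t,\cdot)$ at a.e.\ $t$ implies $P_N w(t)\to w(t)$ in $L^\infty$, and dominated convergence then yields $P_N w\to w$ in $L^4(0,1;L^\infty)$. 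The same compactness principle therefore gives $\eta_2(N):=\sup_{w\in\Cal K}\|(I-P_N)w\|_{L^4(0,1;L^\infty)}\to0$, and translation invariance of $\Cal K$ upgrades this to $\sup_{u\in\Cal K,\,t\in\R}\|A_\eb\|_{L^4(t,t+1;L^\infty)}=\eta_2(N)$. Choosing $N$ so large that $\eta_1(N)+\eta_2(N)\le\eb$ then completes the argument. The subtlety is exactly in this last step: without the upgrade from $L^\infty$ to $C_0(\R^3)$, the mollification property $P_N w\to w$ in the $L^\infty$ norm would fail, and the uniform compact-set convergence that drives the Strichartz bound on $A_\eb$ would break down.
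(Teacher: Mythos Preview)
Your argument is correct. The paper gives only a one-line sketch, invoking ``compactness of $\Cal K$, its invariance with respect to time shifts and the Hausdorff criterion''; read literally, this means choosing $B_\eb$ from a finite $\eb$-net of smooth approximants to $\Cal K$. You instead take the linear route: set $B_\eb=P_Nu$ for a smooth Fourier cutoff $P_N$ and use the soft principle that a strongly convergent family of uniformly bounded operators converges uniformly on compact sets. Both approaches rest on exactly the same two ingredients --- compactness of $\Cal K$ in $C_{loc}(\R,\Cal E)\cap L^4_{loc}(\R,C_0(\R^3))$ and the upgrade from $L^\infty$ to $C_0$ (Remark~\ref{Rem1-C0}) needed to make mollification converge in sup-norm --- and you flag this subtlety precisely where the paper does. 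The practical difference is that your decomposition is linear in $u$ and gives an explicit $C_\eb$ in terms of $N(\eb)$ and $\sup_{\Cal K}\|\xi_u\|_{\Cal E}$, whereas the Hausdorff-net construction produces a $B_\eb$ chosen from a finite list and a $C_\eb$ that is just the maximum over that list; for the purposes of the subsequent regularity argument in Section~\ref{s5}, either version suffices.
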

Indeed, the assertion of the corollary follows in a straightforward way from the compactness of $\Cal K$, its invariance with respect to time shifts and the Hausdorff criterion. We just mention that the fact that $u\in L^4_{loc}(\R,C(\R^3))$ (and even $u\in L^4_{loc}(\R,C_0(\R^3))$ according to Remark \ref{Rem1-C0}, not only $L^4_{loc}(\R,L^\infty(\R^3))$) is crucial here since the smooth functions are not dense in $L^\infty(\R^3)$ and the approximation by smooth functions does not work in $L^\infty$.
\par
Analogously to this corollary, we also have the following result.
\begin{corollary}\label{Cor4.app2} Let the assumptions of Proposition \ref{Prop4.kcomp} hold. Then, for every $u\in\Cal K$ and every $\eb>0$, there exist functions $A^{f'}_\eb(t)$ and $B^{f'}_\eb(t)$ such that $f'(u(t))=A^{f'}_\eb(t)+B^{f'}_\eb(t)$ such that
\begin{equation}\label{4.afp}
\|A^{f'}_\eb\|_{L^1(t,t+1;L^\infty)}\le \eb,\ \ \|B^{f'}_\eb\|_{W^{1,\infty}(\R\times\R^3)}\le C_\eb,\ \ t\in\R,
\end{equation}
where the constant $C_\eb$ is independent of $u\in\Cal K$. Moreover, there exist functions $A^{f''}_\eb(t)$ and $B^{f''}_\eb(t)$ such that
$f''(u(t))=A^{f''}_\eb(t)+B^{f''}_\eb(t)$ and
\begin{equation}
\|A^{f''}_\eb\|_{L^{4/3}(t,t+1;L^\infty)}\le \eb,\ \ \|B^{f''}_\eb\|_{L^\infty(\R\times\R^3)}\le C_\eb,\ \ t\in\R,
\end{equation}
where $C_\eb$ is also uniform with respect to $u\in\Cal K$.
\end{corollary}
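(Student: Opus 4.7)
The plan is to follow the strategy of Corollary~\ref{Cor4.app1} applied to $f'(u)$ and $f''(u)$ separately: the $f'$-decomposition will come from the chain rule applied to a smoothed approximation of $u$, while the $f''$-decomposition will come from a direct compactness/Vitali argument (no derivative of $f''$ is assumed, so a mean-value argument is unavailable for that part).

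For the $f'$-part I first strengthen Corollary~\ref{Cor4.app1} to produce, for every $\eta>0$ and every $u\in\Cal K$, a decomposition $u=A_\eta+B_\eta$ with $\|A_\eta\|_{L^\infty(\R,\Cal E)}+\|A_\eta\|_{L^4(t,t+1;L^\infty)}\le\eta$ and $\|B_\eta\|_{W^{1,\infty}(\R\times\R^3)}\le M_\eta$, uniformly in $u\in\Cal K$ and $t\in\R$. This upgrade proceeds by the same Hausdorff-criterion argument, but each element of the finite $\eta/2$-net is mollified in the space $C_{loc}(\R,\Cal E)\cap L^4_{loc}(\R,C_0(\R^3))$, in which $C^\infty_b(\R\times\R^3)$ is dense; one pays only a constant $M_\eta$ depending on $\eta$. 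Set $B^{f'}_\eb:=f'(B_\eta)$ and $A^{f'}_\eb:=f'(u)-f'(B_\eta)$. The bound $\|B^{f'}_\eb\|_{W^{1,\infty}}\le C_\eb$ is immediate from the chain rule and $|f''(u)|\le C(1+|u|^{3-\kappa})$, since $\nabla_x f'(B_\eta)=f''(B_\eta)\nabla_x B_\eta$ and $\Dt f'(B_\eta)=f''(B_\eta)\Dt B_\eta$ are both controlled by $M_\eta$. For $A^{f'}_\eb$ the fundamental theorem of calculus yields
\begin{equation*}
|A^{f'}_\eb(t,x)|\le C\bigl(1+|u(t,x)|^{3-\kappa}+|B_\eta(t,x)|^{3-\kappa}\bigr)|A_\eta(t,x)|,
\end{equation*}
and taking $L^\infty_x$ followed by integration over $(t,t+1)$, with H\"older in time using the exponent split $\tfrac{3-\kappa}{4}+\tfrac{1+\kappa}{4}=1$ and the uniform Strichartz bound on $\|u\|_{L^4(t,t+1;L^\infty)}$, gives
\begin{equation*}
\|A^{f'}_\eb\|_{L^1(t,t+1;L^\infty)}\le C\bigl(1+M_\eta^{3-\kappa}\bigr)\eta.
\end{equation*}
Choosing $\eta=\eta(\eb)$ small enough so that $\eta(1+M_\eta^{3-\kappa})\le\eb$ completes the $f'$-part.

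For the $f''$-part I would exploit that the map $\Phi:u\mapsto f''(u)$ sends $\Cal K$ continuously into $L^{4/3}(t,t+1;L^\infty)$. Continuity follows from Vitali's theorem: convergence $u_n\to u$ in $L^4_{loc}(L^\infty)$ gives, along a subsequence, pointwise a.e.\ convergence of $u_n$ to $u$, hence pointwise a.e.\ convergence of $f''(u_n)$ to $f''(u)$ by continuity of $f''$; uniform integrability of $\|f''(u_n)(t)\|_{L^\infty_x}^{4/3}$ follows from $|f''(u)|\le C(1+|u|^{3-\kappa})$ together with the bound on $u_n$ in $L^4(L^\infty)$, because the resulting exponent $3/(3-\kappa)$ is strictly larger than $1$ (this is exactly where the hypothesis $\kappa>0$ enters). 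Hence $\Phi(\Cal K)$ is precompact in $L^{4/3}(0,1;L^\infty)$ and, by shift-invariance of $\Cal K$, uniformly so in $L^{4/3}(t,t+1;L^\infty)$. The Hausdorff criterion produces a finite $\eb/2$-net $\{h_1,\ldots,h_N\}$, and truncating each $h_i$ in time by $\tilde h_i(t,x):=h_i(t,x)\,\mathbf{1}_{\{\|h_i(t)\|_{L^\infty_x}\le K\}}$ with $K$ large makes $\|h_i-\tilde h_i\|_{L^{4/3}(L^\infty)}\le\eb/2$. Setting $B^{f''}_\eb:=\tilde h_{i(u)}$ (with $i(u)$ the net-closest index) and $A^{f''}_\eb:=f''(u)-B^{f''}_\eb$ yields the required decomposition with $C_\eb=K$.

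The main technical obstacle is the parameter tuning in the $f'$-part, namely securing $\eta(1+M_\eta^{3-\kappa})\le\eb$, since $M_\eta\to\infty$ as $\eta\to0$ and the trade-off between approximation error and $W^{1,\infty}$-size is not automatic. The most transparent route I see is to carry out the mollification by a combined spatial-temporal kernel $\phi_\rho(x)\psi_\tau(t)$ with the scales $\rho,\tau$ chosen as appropriate powers of a single parameter, and to use the uniform boundedness of $\Cal K$ in $L^\infty(\R,\Cal E)\cap L^4_{loc}(L^\infty)$ together with the embedding $\dot H^1\subset L^6$ to get explicit algebraic decay of $\|u-u\ast(\phi_\rho\psi_\tau)\|_{L^4(L^\infty)}$ dominating the algebraic growth of $M_\eta$.
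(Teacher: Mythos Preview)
Your $f''$-argument is essentially the paper's: $f''$ is continuous, $\Cal K$ is compact, hence $f''(\Cal K)$ is compact in $L^{4/3}_{loc}(\R,C(\R^3))$, and approximation by bounded functions does the rest. The only point you leave implicit is how to glue the interval-by-interval choice of net element into a single $B^{f''}_\eb$ defined on all of $\R$; this is harmless (use shift-invariance to work with unit intervals $[n,n+1]$, $n\in\Bbb Z$, and piece together), but should be said.

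For the $f'$-part your construction via $B^{f'}_\eb=f'(B_\eta)$ is fine, but the ``main technical obstacle'' you worry about is self-inflicted. In the line
\[
|A^{f'}_\eb|\le C\bigl(1+|u|^{3-\kappa}+|B_\eta|^{3-\kappa}\bigr)|A_\eta|
\]
there is no need to bound $|B_\eta|$ by its $L^\infty$-norm $M_\eta$. Instead observe that $\|B_\eta\|_{L^4(t,t+1;L^\infty)}\le\|u\|_{L^4(t,t+1;L^\infty)}+\|A_\eta\|_{L^4(t,t+1;L^\infty)}$, which is bounded \emph{uniformly} on $\Cal K$ independently of $\eta$. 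The same H\"older split $\tfrac{3-\kappa}{4}+\tfrac14+\tfrac\kappa4=1$ then gives
\[
\|A^{f'}_\eb\|_{L^1(t,t+1;L^\infty)}\le C\bigl(1+\|u\|_{L^4(L^\infty)}^{3-\kappa}+\|B_\eta\|_{L^4(L^\infty)}^{3-\kappa}\bigr)\|A_\eta\|_{L^4(L^\infty)}\le C\eta,
\]
with $C$ independent of $\eta$. The constant $M_\eta$ enters only through $C_\eb$ in the $W^{1,\infty}$-bound on $B^{f'}_\eb=f'(B_\eta)$, where no smallness is required. So the parameter tuning problem and the quantitative mollification argument you sketch at the end are unnecessary.

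The paper itself takes the shorter path for both parts: it notes that $u\mapsto f'(u)$ is continuous from $L^4_{loc}(\R,C(\R^3))$ to $L^1_{loc}(\R,C(\R^3))$ (this is immediate from the mean-value inequality you wrote down plus H\"older), so $f'(\Cal K)$ is itself compact; then one applies the Hausdorff criterion and mollification directly to $f'(\Cal K)$. Your chain-rule route is a legitimate alternative and has the minor advantage that the $W^{1,\infty}$-structure of $B^{f'}_\eb$ is transparent, but it is not simpler.
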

Indeed, since $f'$ and $f''$ are continuous, the sets $f'(\Cal K)$ and $f''(\Cal K)$ are also compact in the proper spaces and the approximation arguments work.
\par
We are now ready to state the main result of the section.
\begin{theorem}\label{Th4.smooth} Let the assumptions of Theorem \ref{Th3.attr} hold. Then the global attractor $\Cal A$ of the Strichartz solution
semigroup $S(t)$ associated with the hyperbolic CHO equation \eqref{2.CHO} is bounded in the second energy space $\Cal E_2$.
\end{theorem}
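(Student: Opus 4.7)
The plan is to reduce the $\Cal E_2$-bound on $\Cal A=\Cal K|_{t=0}$ to the claim that along every complete bounded trajectory $u\in\Cal K$, the time derivative $v:=\Dt u$ satisfies $v\in L^\infty(\R;\dot H^1\cap\dot H^{-1})$ and $\Dt v\in L^\infty(\R;\dot H^{-1})$ with bounds uniform in $u\in\Cal K$. The reduction is immediate: rewriting \eqref{2.CHO} as $\Dx^2 u=-\Dt v-v+\Dx f(u)-\Dx g-\alpha u$, every term on the right-hand side lies in $\dot H^{-1}$ (using $f(u),g\in\dot H^1$, $u\in\dot H^{-1}$, together with the new bounds on $v$), so $u\in\dot H^3\cap\dot H^{-1}$ and $\xi_u(0)\in\Cal E_2$ uniformly over $\Cal K$.

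To prove the regularity of $v$ I would differentiate \eqref{2.CHO} in time, obtaining the linear hyperbolic CHO equation
\[
\Dt^2 v+\Dt v+\Dx(\Dx v-H)+\alpha v=0,\qquad H:=f'(u)\,v,
\]
and apply the dissipative Strichartz estimate \eqref{1.lin-str} backwards, pushing the initial time $t_0\to-\infty$. Along a bounded trajectory the initial-data contribution in \eqref{1.lin-str} vanishes, leaving
\[
\|\xi_v(t)\|_{\Cal E}+\|v\|_{L^4(t,t+1;L^\infty)}\le C\int_{-\infty}^t e^{-\beta(t-s)}\|f'(u)v\|_{\dot H^1}(s)\,ds.
\]
Since $v$ is a priori only in the low regularity given by the energy estimate, this argument has to be carried out on the Fourier truncations $v_N:=P_N v$, which are smooth in $x$, automatically in $\Cal E_v:=(\dot H^1\cap\dot H^{-1})\times\dot H^{-1}$, and solve the truncated equation with forcing $P_N H$; the uniform-in-$N$ bound will then pass to the limit by weak-$*$ convergence and lower semicontinuity of Sobolev norms.

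The heart of the proof is the estimate of $\|f'(u)v\|_{\dot H^1}$ with the structure ``$\eb$-prefactor times the target norm of $v$ plus a $C_\eb$-constant'', which closes a fixed-point-type inequality. Here I would use simultaneously the three splittings provided by the compactness of $\Cal K$: $u=A_\eb+B_\eb$ from Corollary \ref{Cor4.app1}, together with $f'(u)=A^{f'}_\eb+B^{f'}_\eb$ and $f''(u)=A^{f''}_\eb+B^{f''}_\eb$ from Corollary \ref{Cor4.app2}. Expanding $\nabla(f'(u)v)=f'(u)\nabla v+f''(u)\,\nabla u\cdot v$ and distributing each factor, the purely-$B$ pieces (such as $B^{f'}_\eb\nabla v$ and $B^{f''}_\eb\nabla B_\eb\cdot v$) are $L^1_{loc}(L^2)$-bounded by $C_\eb$ times the target norm of $v$, using Sobolev embedding and the interpolation $\|v\|_{L^2}^2\le\|v\|_{\dot H^1}\|v\|_{\dot H^{-1}}$. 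The integrability indices in Corollary \ref{Cor4.app2} are tuned to Strichartz duality for the mixed pieces: $\|A^{f'}_\eb\|_{L^1(L^\infty)}\le\eb$ pairs with $\|\nabla v\|_{L^\infty(L^2)}$, while $\|A^{f''}_\eb\|_{L^{4/3}(L^\infty)}\le\eb$ pairs with the Strichartz norm $\|v\|_{L^4(L^\infty)}$ (together with $\nabla u$ controlled via its $B_\eb$-part plus the small remainder). Collecting everything, I expect to arrive at
\[
Y_N\le C_\eb+C\eb\,Y_N,\qquad Y_N:=\|\xi_{v_N}\|_{L^\infty(\R;\Cal E_v)}+\sup_t\|v_N\|_{L^4(t,t+1;L^\infty)},
\]
uniformly in $N$. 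Fixing $\eb$ with $C\eb<1/2$ absorbs the second term (legitimate since $Y_N$ is a priori finite for each $N$, thanks to the $P_N$-smoothing), closes the estimate with $Y_N\le 2C_\eb$, and then $N\to\infty$ delivers the theorem. The main obstacle will be precisely this H\"older/Sobolev bookkeeping: distributing the spatial integrability across three factors (from $f''(u)$, $\nabla u$, and $v$) so that every piece containing at least one $A$-factor genuinely acquires the $\eb$-prefactor via the Strichartz-tuned indices, while every pure-$B$ piece is uniformly $C_\eb$-bounded.
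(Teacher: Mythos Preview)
Your overall architecture---differentiate in $t$, treat $v=\Dt u$ via the linear hyperbolic CHO equation, exploit the compactness-based splittings of Corollaries~\ref{Cor4.app1}--\ref{Cor4.app2} to manufacture an $\eb$-prefactor, close by absorption, and finish by elliptic regularity---matches the paper's. The gap is your claim that a \emph{single} application of the linear Strichartz bound \eqref{1.lin-str} produces the closed inequality $Y_N\le C_\eb+C\eb\,Y_N$.

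The obstruction is precisely the ``pure-$B$'' piece $B^{f'}_\eb\nabla v$ in your expansion of $\nabla(f'(u)v)$. For the other pure-$B$ term $B^{f''}_\eb\nabla B_\eb\cdot v$ the interpolation $\|v\|_{L^2}^2\le\|v\|_{\dot H^1}\|v\|_{\dot H^{-1}}$ together with the a~priori control of $\|\Dt u\|_{\dot H^{-1}}$ on $\Cal K$ indeed downgrades the contribution to $C_\eb\,Y_N^{1/2}$, which is harmless. But for $B^{f'}_\eb\nabla v$ one can only write
\[
\|B^{f'}_\eb\nabla v\|_{L^1(t,t+1;L^2)}\le C_\eb\,\|v\|_{L^\infty(\dot H^1)}\le C_\eb\,Y_N,
\]
with $C_\eb$ large; interpolation cannot help because $\nabla v$ already sits at the top regularity. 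This term therefore cannot be absorbed, and the one-step scheme does not close.

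The paper resolves this by a two-step coupling. In Step~1 it performs a \emph{direct energy estimate} on \eqref{4.dif} (multiplying by $\Dx^{-1}(\Dt v+\delta v)$, not invoking \eqref{1.lin-str}), where the dangerous piece appears as $(B^{f'}_\eb\nabla v,\nabla\Dx^{-1}\Dt v)$. Integrating by parts in $x$ gives $-(\nabla B^{f'}_\eb\,v,\nabla\Dx^{-1}\Dt v)-(B^{f'}_\eb v,\Dt v)$, and since $B^{f'}_\eb\in W^{1,\infty}_{t,x}$ the last term is $-\tfrac12\tfrac d{dt}(B^{f'}_\eb v,v)$ plus lower-order pieces involving only $\|v\|_{L^2}$. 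The total time derivative is absorbed into a modified energy $\bar{\Cal E}_v$, and the remaining $\|v\|_{L^2}$ interpolates as above to $C_\eb[\bar{\Cal E}_v]^{1/2}$. Gronwall then yields the intermediate estimate \eqref{4.en-stri}: $\|\xi_v\|_{L^\infty(\Cal E)}\le\eb\,\|v\|_{L^4_b(L^\infty)}+C_\eb$. Only in Step~2 is \eqref{1.lin-str} applied, and there the full term $\|f'(u)\nabla v\|_{L^1(L^2)}\le C\|\xi_v\|_{L^\infty(\Cal E)}$ is bounded via \eqref{4.en-stri}, which now carries the needed $\eb$-prefactor on the Strichartz norm. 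In short: the energy identity, not the black-box estimate \eqref{1.lin-str}, is what makes the $B^{f'}_\eb\nabla v$ contribution absorbable, and your proposal is missing that mechanism.
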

\begin{proof} We give here only the formal derivation of the control of the norm of $\Cal A$ in $\Cal E_2$ which can be justified by approximating the time derivative $v(t):=\Dt u(t)$ by finite differences. Indeed, let $u\in\Cal K$ be an arbitrary complete bounded trajectory. Then, the function $v(t)=\Dt u(t)$ solves
\begin{equation}\label{4.dif}
\Dt^2 v+\Dt v+\Dx(\Dx v-f'(u(t))v)+\alpha v=0.
\end{equation}
We complete the proof of the theorem in 3 steps. At Step 1, we estimate the energy norm of $v$ through its Strichartz norm. Then, at Step 2, we use the Stricharz estimate  to verify that
\begin{equation}\label{4.v}
\|\xi_v(t)\|_{\Cal E}+\|v\|_{L^4(t,t+1;L^\infty)}\le C,
\end{equation}
where the constant $C$ is independent of $t\in\R$ and $u\in\Cal K$. Finally, at Step 3, we use the elliptic regularity to verify that $u\in H^3(\R^3)$ which will complete the proof.
\par
{\it Step 1. Energy estimate.} We multiply equation \eqref{4.dif} by $\Dt\Dx^{-1}v+\delta\Dx^{-1}v$ and integrate over $x$. Then, arguing as in the derivation of \eqref{1.lin-big}, we end up with
\begin{equation}\label{4.mainen}
\frac d{dt}\Cal E_v(t)+\beta\Cal E_v(t)\le -\delta(f'(u(t)),v^2(t)) -(f'(u(t))v(t),\Dt v(t)).
\end{equation}
The first term on the right hand side of \eqref{4.mainen} can be easily estimated as follows
\begin{equation}
|\delta (f'(u),v^2)|\leq C(1+\|u\|^4_{L^{\infty}})\|v\|^2_{L^2}\le C(1+\|u\|^4_{L^{\infty}})[\E_v(t)]^\frac{1}{2},
\end{equation}
where at the last step we used $[\dot H^{-1},\dot H^1]_\frac12=L^2$ and uniform control of $\|\Dt u\|_{\dot H^{-1}}$ on $\Cal K$.

To estimate the second term on the right hand side of \eqref{4.mainen} we rewrite it in the form
\begin{multline}
-(f'(u)v,\Dt v)=(\Nx(f'(u)v),\Nx\Dx^{-1}\Dt v)=(f'(u)\Nx v,\Nx\Dx^{-1}\Dt v)+\\+(f''(u)\Nx u\, v,\Nx\Dx^{-1}\Dt v)=I_1+I_2.
\end{multline}
Using Corollary \ref{Cor4.app2}, we further transform the first term $I_1$ as follows
\begin{multline}
I_1=(A_\eb^{f'}(t)\Nx v,\Nx\Dx^{-1}v)+(B_\eb^{f'}(t)\Nx v,\Nx\Dx^{-1}\Dt v)\le\\
\le C\|A^{f'}_\eb(t)\|_{L^\infty}\Cal E_v(t)-(\Nx B_\eb^{f'}(t)v,\Nx\Dx^{-1}\Dt v)-(B_\eb^{f'}(t)v,\Dt v)\le\\
\le C\|A^{f'}_\eb(t)\|_{L^\infty}\Cal E_v(t)-\frac d{dt}(B_\eb^{f'}v,v)+C_\eb\|v\|_{L^2}\|\Dt v\|_{\dot H^{-1}}+C_\eb\|v\|^2_{L^2}\le\\
\le (C\|A^{f'}_\eb(t)\|_{L^\infty}+\eb)\Cal E_v(t)-\frac d{dt}(B_\eb^{f'}v,v)+C_\eb[\Cal E_v(t)]^{1/2},
\end{multline}
where we have implicitly used the interpolation
\begin{equation}
C_\eb\|v\|_{L^2}^2\le C_\eb\|v\|_{\dot H^1}\|\Dt u\|_{\dot H^{-1}}
\end{equation}
and the fact that the $\dot H^{-1}$-norm of $\Dt u$ is uniformly bounded on $\Cal K$.
\par
To estimate the second term $I_2$, we note that, due to Corollaries \ref{Cor4.app1} and \ref{Cor4.app2}, for every $\eb>0$, we have
\begin{equation}\label{4.split}
f''(u)\Nx u=K_\eb(t)+D_\eb(t)
\end{equation}
and
\begin{equation}\label{4.keb}
\|K_\eb\|_{L^{4/3}(t,t+1;L^2)}\le C\eb,\ \ \|D_\eb\|_{L^\infty(\R\times\R^3)}\le C_\eb,
\end{equation}
where the constant $C_\eb$ is uniform with respect to $u\in\Cal K$. Therefore,
\begin{multline}
I_2=(K_\eb(t) v,\Nx\Dx^{-1}\Dt v)+(D_\eb(t) v,\Nx\Dx^{-1}\Dt v)\le\\\le \|K_\eb(t)\|_{L^2}\|v(t)\|_{L^\infty}\|\Dt v\|_{\dot H^{-1}}+C_\eb\|v\|_{L^2}\|\Dt v\|_{\dot H^{-1}}\le\\\le \eb\Cal E_v(t)+(\|K_\eb(t)\|_{L^2}\|v(t)\|_{L^\infty}+C_\eb)[\Cal E_v(t)]^{1/2}.
\end{multline}
Inserting the obtained estimates in the right-hand side of \eqref{4.mainen} we end up with
\begin{multline}\label{4.mainen1}
\frac d{dt}\(\Cal E_v(t)+(B_\eb^{f'}(t)v,v)\)+(\beta-C\|A_\eb^{f'}(t)\|_{L^\infty}) \Cal E_v(t)\le\\\le (C_\eb+\|K_\eb(t)\|_{L^2}\|v(t)\|_{L^\infty}+\|u\|^4_{L^\infty})[\Cal E_v(t)]^{1/2}.
\end{multline}
We now fix $\bar{\Cal E}_v(t):=\Cal E_v(t)+(B_\eb^{f'}(t)v,v)+L\|v\|_{\dot H^{-1}}^2$, where the constant $L=L_\eb$ is such that
\begin{equation}
\frac14\|\xi_v(t)\|^2_{\Cal E}\le \bar{\Cal E}_v(t)\le C'_\eb\|\xi_v(t)\|^2_{\Cal E}
\end{equation}
(it is possible to find such $L$ and $C_\eb$ since $B_\eb(t)$ is bounded). Then, using the fact that the $\dot H^{-1}$ norm of $v$ is under the control, we transform  \eqref{4.mainen1} to
\begin{multline}\label{4.mainen2}
\frac d{dt}\bar{\Cal E}_v(t)+(\beta-C\|A_\eb^{f'}(t)\|_{L^\infty})\bar{\Cal E}_v(t)\le\\\le (C_\eb+C\|K_\eb(t)\|_{L^2}\|v\|_{L^\infty}+\|u\|^4_{L^\infty}+C_\eb\|A_\eb^{f'}(t)\|_{L^\infty})[\bar{\Cal E}_v(t)]^{1/2},
\end{multline}
where the positive constants $\beta$, $C$ and $C_\eb$ are independent of $t$ and $u\in\Cal K$. Applying the Gronwall inequality to \eqref{4.mainen2}
and using estimate \eqref{4.afp} to control the norm of $A_\eb^{f'}$, we infer
\begin{equation}
\|\xi_v(T)\|^\frac{1}{2}_{\Cal E}\le C[\bar{\Cal E}_v(0)]^{1/2}e^{-\beta T}+C_\eb+C\int_0^Te^{-\beta(T-s)}\|K_\eb(s)\|_{L^2}\|v(s)\|_{L^\infty}\,ds,
\end{equation}
where the constant $C$ is independent of $\eb$. Using now the control \eqref{4.keb} for estimating the integral in the right-hand side, we arrive at
\begin{equation}
\|\xi_v(T)\|_{\Cal E}\le C[\bar{\Cal E}_v(0)]^{1/2}e^{-\beta T}+C_\eb+C\eb\sup_{t\in\R}\|v\|_{L^4(t,t+1;L^\infty)}.
\end{equation}
Finally, passing to the limit $T\to\infty$ and using the invariance of $\Cal K$ with respect to time shifts, we infer that, for every $\eb>0$ there exists the constant $C_\eb$ such that, for every $u\in\Cal K$,
\begin{equation}\label{4.en-stri}
\|\xi_v(t)\|_{\Cal E}\le \eb\sup_{t\in\R}\|v\|_{L^4(t,t+1;L^\infty)}+C_\eb, \ \ t\in\R,
\end{equation}
where $C_\eb$ is independent of $u\in\Cal K$ and $t\in\R$.
\par
{\it Step 2. Strichartz estimate.}
We treat equation \eqref{4.dif} as a linear one with the external force  $f'(u)v$ and apply the dissipative estimate \eqref{1.lin-str} on the time interval $t\in[0,1]$:
\begin{multline}\label{4.stri}
\|v\|_{L^4(0,1;L^\infty)}\le C(\|\xi_v(0)\|_{\Cal E}+\|f'(u)v\|_{L^1(0,1;\dot H^1)})\le\\
\le C\eb\|v\|_{L^4_b(\R,L^\infty)}+C_\eb+C\|f'(u)v\|_{L^1(0,1;\dot H^1)},
\end{multline}
where $\|v\|_{L^4_b(\R,L^\infty)}:=\sup_{t\in\R}\|v\|_{L^4(t,t+1;L^\infty)}$.
Thus, we only need to estimate the last term in the right-hand side:
\begin{multline}
\|f'(u(t))v(t)\|_{L^1(0,1;\dot H^1)}\le \|f'(u(t))\Nx v(t)\|_{L^1(0,1;L^2)}+\\+\|f''(u(t))\Nx u(t)v(t)\|_{L^1(0,1;L^2)}:=J_1+J_2.
\end{multline}
Using the fact that the Strichartz norm of $u$ is uniformly bounded together with estimate \eqref{4.en-stri}, we may estimate the term $J_1$ as follows:
\begin{multline}
J_1\le \|f'(u)\|_{L^1(0,1;L^\infty)}\|v\|_{L^\infty(0,1;\dot H^1)}\le\\\le C\|\xi_v\|_{L^\infty(0,1;\Cal E)}\le C\eb\|v\|_{L^4_b(\R,L^\infty)}+C_\eb,
\end{multline}
where the constant $C$ is independent of $\eb$ and $u\in\Cal K$.
\par
To estimate the term $J_2$, we use again the splitting \eqref{4.split} with some new $\eb_1>0$ together with estimate \eqref{4.en-stri}:
\begin{multline}
J_2= \|K_{\eb_1} v\|_{L^1(0,1;L^2)}+\|D_{\eb_1}v\|_{L^1(0,1;L^2)}\le \\\le C\|K_{\eb_1}\|_{L^{4/3}(0,1;L^2)}\|v\|_{L^4(0,1;L^\infty)}+C_{\eb_1}\|v\|_{L^\infty(0,1;L^2)}\le \\\le C\eb_1\|v\|_{L^4_b(\R,L^\infty)}+C_{\eb_1}\|\xi_v\|_{L^\infty(0,1;\Cal E)}\le (C\eb_1+C_{\eb_1}\eb)\|v\|_{L^4_b(\R,L^\infty)}+C_{\eb}C_{\eb_1}.
\end{multline}
Inserting the obtained estimates into the right-hand side of \eqref{4.stri} and fixing first $\eb_1$ to be small enough and then $\eb\le\eb_1$ in such way that $C_{\eb_1}\eb\le \eb_1$, we end up with
\begin{equation}
\|v\|_{L^4(0,1;L^\infty)}\le C\eb_1\|v\|_{L^4_b(\R,L^\infty)}+C_{\eb_1},
\end{equation}
where the constant $C$ is independent of $\eb_1$ both constants $C$ and $C_{\eb_1}$ are independent of $u\in\Cal K$. Since $\Cal K$ is translation invariant, the last estimate (with sufficiently small $\eb_1$) implies the desired estimate for the Strichartz norm:
\begin{equation}
\|v\|_{L^4_b(\R,L^\infty)}\le C,\ \ v=\Dt u,\ \ u\in\Cal K
\end{equation}
which together with estimate \eqref{4.en-stri} gives the desired estimate \eqref{4.v} and completes the estimate of the energy norm of $\Dt u$.
\par
{\it Step 3. Elliptic regularity.}  Estimate \eqref{4.v} guarantees that $\Dt^2 u(t)=\Dt v(t)$ is uniformly bounded in $\dot H^{-1}$, so since $\alpha u(t)$ is also uniformly bounded in $\dot H^{-1}$ applying the operator $\Dx^{-1}$ to equation \eqref{2.CHO}, we see that
\begin{equation}\label{4.ell}
\Dx u(t)-f(u(t))-u(t)=-g-u(t)-\Dx^{-1}(\Dt^2 u(t)-\Dt u(t)-\alpha u(t)):=H_u(t)
\end{equation}
and the right-hand side $H_u(t)$ is uniformly bounded in $\dot H^1$. Moreover, using the growth restriction \eqref{2.f} on the function $f$ and the fact that $f(0)=0$, we see that $|f(u)|\le C(|u|+|u|^5)$ and, consequently, $f(u)$ is uniformly bounded in $H^{-1}(\R^3)$. Since $\Dx u(t)$ and $u(t)$ are also uniformly bounded in $H^{-1}(\R^3)$, then using the obvious embedding
\begin{equation}
 \dot H^1(\R^3)\cap H^{-1}(\R^3)\subset H^1(\R^3),
\end{equation}
we see that $H_u(t)\in H^1(\R^3)$ and
\begin{equation}
\|H_u(t)\|_{H^1}\le C,\ \ u\in\Cal K.
\end{equation}
Thus, we may use the classical maximal regularity in Sobolev spaces of the semilinear elliptic equation \eqref{4.ell} to establish that
$u\in H^3(\R^3)$ and
\begin{equation}
\|u(t)\|_{H^3(\R^3)}\le C, \ \ t\in\R,\ \ u\in \Cal K.
\end{equation}
This estimate, together with estimate \eqref{4.v} for the time derivative $v=\Dt u$ gives that $\xi_u(t)$ is uniformly bounded in $\Cal E_2$ and finishes the proof of the theorem.
\end{proof}
\begin{remark} Arguing analogously, one may show that the actual regularity of the global attractor $\Cal A$ is restricted by the regularity of the non-linearity $f$ and the external force $g$. In particular, if they are both $C^\infty$-smooth, the attractor will be $C^\infty$-smooth as well.
\end{remark}

\section{Generalizations and concluding remarks}\label{s6}
In this concluding section, we discuss possible generalizations of the result obtained above as well as the related open problems.

\begin{remark}\label{Rem5.stri} As we have seen, the global well-posedness of the hyperbolic CHO equation is crucially related with the validity of the Strichartz estimate \eqref{1.schro-stri} for the linear Schr\"odinger equation \eqref{1.schro}. In turn, this estimate follows from the standard Strichartz estimate
\begin{equation}\label{5.schro-good}
\|U\|_{L^p(-T,T;L^q(\R^3))}\le C\|U_0\|_{\dot H^s(\R^3)},\ \frac2p+\frac3q=\frac32-s
\end{equation}
for the homogeneous Scr\"odinger equation: $U(t):=e^{it\Dx}U_0$, see \cite{sogge1}. Thus, to apply the above technique for general domains $\Omega$, we need the analogue of estimate \eqref{5.schro-good} in $\Omega\subset\R^3$. However, to the best of our knowledge, the analogue of \eqref{5.schro-good} is known for the exterior domains $\Omega\subset\R^3$ only and for general bounded domain only the estimate
\begin{equation}
\|U\|_{L^p(-T,T;L^q(\Omega))}\le C\|U_0\|_{\dot H^{s+\frac1p}(\Omega)}
\end{equation}
with a loss of $\frac1p$ derivatives is known, see \cite{sogge1}. Thus, in contrast to the case of semilinear wave equations, see \cite{stri,plan2,KSZ2013} and references therein, the well-posedness result can be extended in a straightforward way to the case of exterior domains $\Omega$ only. The extension of the result to the case of hyperbolic CHO in bounded domains $\Omega$ is an interesting open problem which requires the additional non-trivial arguments to be involved. We refer the reader to the recent paper \cite{plan} where the global solvability of cubic non-linear Schr\"odinger equation has been established using the alternative method related with the so-called bilinear $L^2_{t,x}$ estimates, see also references therein.
\end{remark}
\begin{remark} Another interesting open problem is to extend the obtained result to the non-linearities $f$ of the critical quintic growth rate. In contrast to the subcritical case, we do not have here the control \eqref{2.str-en} of the Strichartz norm through the energy norm of the solution, so the global solvability becomes much more delicate and the analogue of the so-called non-concentration estimates (see \cite{noncon,sogge-book,straus,tao}) should be verified for the case of the hyperbolic CHO equations in order to get the desired global existence. Note also that these non-concentration estimates are the main obstacles for developing the attractor theory for the quintic growth rate. Indeed, if the global existence is known, one can use the so-called  backward regularity on the weak attractor in order to verify the asymptotic compactness analogously to the quintic wave equation, see \cite{KSZ2013}.
\end{remark}
\begin{remark} Finally, it worth mentioning that we did not use the dissipation integral for the proof of main results, so  these results can be easily extended to the case of non-autonomous external forces $g(t)$, e.g., in the sense of uniform or pullback attractors. However, the existence of {\it exponential} attractors is more delicate since, analogously to the Scr\"odinger equation, the hyperbolic CHO equation {\it does not possess} a reasonable weighted energy theory (at least similar to the one, developed in \cite{BV1,MZ,Z4}, see also the references therein). Thus, the standard methods of establishing the finite-dimensionality of a global attractor seem not working. Nevertheless, some analogues of weighted estimates can be proved if the additional regularity of the solution of difference between two solutions is known and based on this observation, one can establish the finite-dimensionality of a global attractor and the existence of exponential attractors, see the forthcoming paper \cite{SZ} for more details.
\end{remark}

\end{document}